\def\a {\alpha }
\renewcommand {\phi} {{\varphi}}
\newcommand {\al} {{\alpha}}
\newcommand {\da} {{\delta}}
\newcommand {\Da} {{\Delta}}
\newcommand {\la} {{\lambda}}
\newcommand {\cD} {{\mathcal D}}
\newcommand {\N} {{\mathbb N}}
\newcommand {\Scirc} {\raise.2ex\hbox{$\scriptstyle\circ$}}
\newcommand {\im} {{\Im\!\mbox{\small\it m}\,}}
\newcommand {\mand} {{\quad\mbox{and}\quad}}
\renewcommand {\mid} {{\,\,\,\colon\,\,\,}}
\newcommand {\Proof} {\noindent{\bf P{\footnotesize\bf ROOF}: } \ }
\newcommand {\Proofof}[1] {\noindent{\bf P{\footnotesize\bf ROOF} of {#1}: } \ }
\newcommand {\ProofEnd} {
             \begin{flushright} \vskip -0.2in $\Box$ \end{flushright}}
\newcommand{\Ba}[1]{\begin{array}{#1}}
\newcommand{\Ea}{\end{array}}
\newcommand{\Be}{\begin{equation}}
\newcommand{\Ee}{\end{equation}}
\newcommand{\Bea}{\begin{eqnarray}}
\newcommand{\Eea}{\end{eqnarray}}
\newcommand{\Beas}{\begin{eqnarray*}}
\newcommand{\Eeas}{\end{eqnarray*}}
\newcommand{\Benu}{\begin{enumerate}}
\newcommand{\Eenu}{\end{enumerate}}
\newcommand{\Bi}{\begin{itemize}}
\newcommand{\Ei}{\end{itemize}}
\newcommand{\BR}{\begin{Remark} \em}
\newcommand{\ER}{\end{Remark}}
\newcommand{\BE}{\begin{example} \em}
\newcommand{\EE}{\end{example}}
\newcounter{remark}
\newtheorem{theorem}[equation]{T{\hskip 0pt\footnotesize\bf HEOREM}}
\newtheorem{proposition}[equation]{P{\hskip 0pt\footnotesize\bf ROPOSITION}}
\newtheorem{corollary}[equation]{C{\hskip 0pt\footnotesize\bf OROLLARY}}
\newtheorem{lemma}[equation]{L{\hskip 0pt\footnotesize\bf EMMA}}
\newtheorem{Remark}[equation]{R{\hskip 0pt\footnotesize\bf EMARK}}
\newtheorem{example}[equation]{E{\hskip 0pt\footnotesize\bf XAMPLE}}
\newcommand{\bprop} {\begin{proposition}}
\newcommand{\eprop} {\end{proposition}}
\newcommand{\btheo} {\begin{theorem}}
\newcommand{\etheo} {\end{theorem}}
\newcommand{\blem} {\begin{lemma}}
\newcommand{\elem} {\end{lemma}}
\newcommand{\bcor} {\begin{corollary}}
\newcommand{\ecor} {\end{corollary}}
\newcommand{\be}{{\bf e}}
\newcommand{{\tBox}}{{\widetilde{\raisebox{-0.2ex}[1.25ex][0ex]{$\Box$}}}}
\begin{document}
\title[Toeplitz and Hankel operators from Bergman to analytic Besov spaces]{Toeplitz and Hankel operators from Bergman to analytic Besov spaces of tube
  domains over symmetric cones.\footnotetext{\emph{2000 Math Subject Classification:} 42B35, 32M15.}
\footnotetext{\emph{Keywords}: Bergman projection, Hankel
operator, Toeplitz operator, Besov space, symmetric
cone.}}
\author{Cyrille Nana}
\address{Cyrille Nana, Department of Mathematics, Faculty of Science, University of Buea, P. O. Box 63, Buea Cameroon}
\email{nana.cyrille@ubuea.cm}
\author{Beno\^it F. Sehba}
\address{Beno\^it Florent Sehba, Department of Mathematics, University of Ghana, P. O. Box LG 62, Legon, Accra, Ghana}
\email{bfsehba@ug.edu.gh}
 \maketitle
\begin{abstract}
We characterize bounded Toeplitz and Hankel operators from weighted Bergman spaces to weighted Besov spaces in tube domains over symmetric cones. We deduce weak factorization results for some Bergman spaces of this setting.
\end{abstract}

\section{Introduction}
\setcounter{equation}{0} \setcounter{footnote}{0}
\setcounter{figure}{0} Let $n\geq 3$ and $\mathcal D=\mathbb R^n+i\Omega$
be the tube domain over an irreducible symmetric  cone $\Omega$
in $\mathbb R^n$. Following the notations of \cite{FK} we
denote the rank of the cone $\Omega$ by $r$ and by $\Delta$
the determinant function of $\mathbb R^n$. As example of
symmetric cone on $\mathbb R^n$ we have the Lorentz cone
$\Lambda_n$ which is a rank 2 cone defined for $n\ge 3$ by
$$\Lambda_n=\{(y_1,\cdots,y_n)\in \mathbb R^n: y_1^2-\cdots-y_n^2>0,\,\,\,y_1>0\},$$
the determinant function in this case is given by the
Lorentz form
$$\Delta(y)=y_1^2-\cdots-y_n^2.$$  We shall denote by $\mathcal{H}(\mathcal D)$ the space
of holomorphic functions on $\mathcal D$.

\vskip .2cm
 For $1\le p<\infty$ and $\nu \in \mathbb R$, let
$L^p_\nu(\mathcal D)=L^p(\mathcal
D,\Delta^{\nu-\frac{n}{r}}(y)dx\,dy)$
denotes the space of functions $f$ 
satisfying the condition
$$\|f\|_{p,\nu}=||f||_{L^p_\nu(\mathcal D)}:=\left(\int_{\mathcal D}|f(x+iy)|^p\Delta^{\nu-\frac{n}{r}}(y)dxdy\right)^{1/p}<\infty.$$
Its closed subspace consisting of holomorphic functions in $\mathcal
D$ is the weighted Bergman space $A^p_\nu(\mathcal D)$. This space
is not trivial i.e. $A^p_\nu(\mathcal D)\neq \{0\}$ only for
$\nu>\frac{n}{r}-1$ (see \cite{DD}). 
\vskip .2cm
The weighted Bergman projection $P_\nu$ is the orthogonal projection of the Hilbert space $L^2_\nu(\mathcal D)$ onto its closed subspace $A^2_\nu(\mathcal D).$ It is well known that $P_\nu$ is an integral operator given by $$P_\nu f(z)=\int_{\mathcal D}K_\nu(z, w) f(w)
dV_\nu(w),
$$
where
 $K_\nu(z, w)=
 c_\nu\,\Delta^{-(\nu+\frac{n}{r})}((z-\overline {w})/i)$
is the weighted Bergman kernel, i.e. the reproducing kernel of  $A^2_\nu(\mathcal D)$ (see
\cite{FK}). Here, we use the notation
$dV_\nu(w):=\Delta^{\nu-\frac{n}{r}}(v) du\,dv$, where
$w=u+iv$ is an element of $\mathcal D$. The unweighted case corresponds to $\nu=\frac{n}{r}.$

\vskip .2cm
It is known that the weighted Bergman projection $P_\nu$ cannot be bounded on $L^p_\nu(\mathcal D)$ for $p$ large
(see \cite{BBGR}, \cite{sehba}). Consequently, the natural mapping
from $A^{p'}_\nu(\mathcal D)$ ($\frac{1}{p}+\frac{1}{p'}=1$) into the dual space $(A^p_\nu(\mathcal
D))^*$ of $A^p_\nu(\mathcal D)$ is not an isomorphism for such
values of the exponent $p$ (see \cite{BBGRS}). In \cite{BBGRS}, it is
also shown that the boundedness of $P_\nu$ on $L^p_\nu(\mathcal D)$ for $p\ge 2$
is equivalent to the validity of the following Hardy-type inequality
for $f\in A^p_\nu(\mathcal D)$:
\begin{equation}\label{hardy}
\int\!\!\int_{\mathcal D} |f(x+iy)|^p \,\Delta^{\nu -
\frac{n}{r}} (y)\,dx\,dy \, \leq\,
C\,\int\!\!\int_{\mathcal D} \,\bigl|\Delta (y) \Box
 f(x+iy)\bigr|^p\,\Delta^{\nu - \frac{n}{r}} (y)\,dx\,dy,
\end{equation} where $\Box=\Delta(\frac{1}{i}
\frac{\partial}{\partial x})$ denotes the differential operator of
degree $r$ in $\mathbb R^n$ defined by the equality: \Be
\Box\,[e^{i(x|\xi)}]=\Delta(\xi)e^{i(x|\xi)}, \quad
x,\xi\in \mathbb R^n. \label{bbox} \Ee When (\ref{hardy})
holds for some $p$ and $\nu$, we speak of Hardy inequality
for $(p,\nu)$. This leads to the following definition of
analytic Besov space $\mathbb B_\nu^p(\mathcal D)$, $1\le
p<\infty$, $\nu\in \mathbb R$.

Let $m\in \mathbb N$, we denote $$\mathcal {N}_m := \{F \in
\mathcal H (\mathcal D) : \Box^m F = 0 \}$$ and set
$$\mathcal{H}_m(\mathcal D)=\mathcal{H}(\mathcal D)/ \mathcal N_m.$$ For
simplicity, we use the following notation for the
normalized operator Box operator: we write
\begin{equation}\label{normalized}
\Delta^m\Box^m F(z):=\Delta^m(\im z)\Box^mF(z).
\end{equation}
We will use the same notations for holomorphic functions
and for equivalence classes in $\mathcal H_m$.

We observe that,
for $F\in\mathcal H_m$, we can speak of the function
$\Box^m F$. Given $\nu\in\mathbb R$, $1\leq p<\infty$ the
Besov space $\mathbb B_\nu^p(\mathcal D)$ is  defined as
follows
\[ \mathbb B^p_\nu:=\bigl\{F\in\mathcal H_{k_0}(\mathcal D)\mid
\Delta^{k_0}\Box^{k_0} F\in L^p_\nu\bigr\}
\]
endowed with the norm $\|F\|_{\mathbb
B^p_\nu}=\|\Delta^{k_0}\Box^{k_0} F\|_{L^p_\nu}$. Here
$k_0=k_0(p,\mu)$ is fixed by \Be
k_0(p,\nu):=\min\bigl\{k\geq0\mid \nu+kp>\frac
{n}{r}-1\;\mbox{ and Hardy's inequality holds for
$(p,\nu+pk)$ }\bigr\}. \label{k0}\Ee Each element of
$\mathbb B^p_\nu$ is the equivalence class of all analytic
solutions of the equation $\Box^{k_0} F =g$, for some $g\in
A_{\nu+k_0p}^p.$ Consequently, these spaces are null when
$\nu+k_0p\leq \frac{n}{r}-1$. We also observe that for
$1\le p\le 2$ and $\nu>\ \frac {n}{r}-1$,  $\mathbb
B^p_\nu(\mathcal D)=A^p_\nu(\mathcal D)$ since  the Hardy
inequality (\ref{hardy}) always holds in this range (see
\cite{BBPR}). When $p>2$, we have  the embedding of
$A^p_\nu(\mathcal D)$ into $\mathbb B^p_\nu(\mathcal D)$.

\vskip .2cm
For $p=\infty$, we define the Bloch $\mathbb B=\mathbb
B^\infty$ of $\mathcal D$ as follows:
Let $m_*$ be the smallest integer $m$ such $m>\frac{n}{r}-1$, 
\[ \mathbb B:=\bigl\{F\in\mathcal H_{m_*}(\mathcal D)\mid
\Delta^{m_*}\Box^{m_*} F\in L^\infty\bigr\}.
\]

\vskip .2cm
Our interest in this paper is the boundedness of two operators, namely the Toeplitz operator with symbol a measure $\mu$ and
the (small) Hankel operator with symbol $b$ from the Bergman space $A^p_\nu(\mathcal D)$ into the Besov space $\mathbb{B}^q_\nu(\mathcal D).$
Note that usually in bounded domains as the unit ball, for the study
of the boundedness of Toeplitz and Hankel operators between Bergman spaces, one needs among others a
good understanding of the topological dual space of the target space. The choice of
$\mathbb B^q_\nu(\mathcal D)$ as the target space is
suggested not only by the lack of continuity of the
projection $P_\nu$ on $L^q_\nu(\mathcal D)$ for large
values of $q$ but also by the fact that $\mathbb B^q_\nu(\mathcal D)$ is the dual space of
of the Bergman space $A^{q'}_\nu(\mathcal D)$ ($\frac{1}{q}+\frac{1}{q'}=1$) for an adapted
duality pairing. 


\section{statement of results}
In this section, we present the main results of the paper.
Our first interest in this paper concerns Toeplitz operator from a
Bergman space to a Besov space. Recall that for  a positive Borel measure $\mu$ on $\mathcal D$ and $\nu>\frac{n}{r}-1$, the Toeplitz operator $T_\mu^\nu$ is the operator defined for any function $f$ with compact support by
\begin{equation}\label{defToeplitz}
T_\mu^\nu f(z):=\int_{\mathcal D}K_\nu(z,w)f(w)d\mu (w),
\end{equation}
where $K_\nu$ is the (weighted) Bergman kernel. Boundedness of Toeplitz operators between Bergman spaces of the unit ball was treated in \cite{PauZhao1},
the method used Carleson embedding and thus for estimations with loss, techniques of Luecking \cite{Luecking1}.
We will essentially make use of the same idea. We note that Carleson embeddings for Bergman spaces of tube domains over
symmetric were obtained by the authors in \cite{NaSeh}. We shall then prove the following result.
\btheo\label{theo:Toepbound1}
 Let $1<p\le q<\infty$, $q\ge 2$, $\alpha,\beta,\nu>\frac{n}{r}-1$ with $\nu>\frac{n}{r}-1+\frac{\beta-\frac{n}{r}+1}{q}-\frac{\alpha-\frac{n}{r}+1}{p}$. Define the numbers $\lambda=1+\frac{1}{p}-\frac{1}{q}$ and $\lambda \gamma=\nu+\frac{\alpha}{p}-\frac{\beta}{q}$. Assume that $\beta'=\beta+(\nu-\beta)q'>\frac{n}{r}-1$. Then the following assertions are equivalent.
\begin{itemize}
\item[(a)] The operator $T_\mu^\nu$ extends as a bounded operator from $A_\alpha^p(\mathcal D)$ to $\mathbb {B}_\beta^q(\mathcal D)$.
\item[(b)] There is a constant $C>0$ such that for any $\delta\in (0,1)$ and any $z\in \mathcal D,$
\Be\label{eq:Toepbound11}
\mu(B_\delta (z))\le C\Delta^{\lambda(\gamma+\frac{n}{r})}(\im z).
\Ee
\end{itemize}
\end{theorem}

We have also this estimation with loss result.
\btheo\label{theo:Toepbound2}
Let $2\le q<p<\infty$, $\alpha,\beta,\nu>\frac{n}{r}-1$ with $\nu>\frac{n}{r}-1+\frac{\beta-\frac{n}{r}+1}{q}-\frac{\alpha-\frac{n}{r}+1}{p}$. Define the numbers $\lambda=1+\frac{1}{p}-\frac{1}{q}$ and $\lambda \gamma=\nu+\frac{\alpha}{p}-\frac{\beta}{q}$. Assume $P_{\nu+m}$ is bounded on $L_\alpha^p(\mathcal D)$ for some integer $m$. Then the following assertions are equivalent.
\begin{itemize}
\item[(a)] The operator $T_\mu^\nu$ extends as a bounded operator from $A_\alpha^p(\mathcal D)$ to $\mathbb {B}_\beta^q(\mathcal D)$.
\item[(b)] For any $\delta\in (0,1)$, the function $$\mathcal {D}\ni z\mapsto \frac{\mu(B_\delta(z))}{\Delta^{\gamma+\frac{n}{r}}(\im z)}$$
belongs to $L_\gamma^{1/(1-\lambda)}(\mathcal D)$.
\end{itemize}

\etheo

\vskip .2cm
Our second interest concerns (small) Hankel operator. For $b\in A^2_\nu(\mathcal D)$, the (small) Hankel operator
with symbol $b$ is defined for $f\in \mathcal
H^\infty(\mathcal D)$
by
$$h_b^{(\nu)}(f)=h_b(f):=P_\nu(b\overline f).$$

Boundedness of Hankel operators between Bergman spaces on the unit ball was considered in \cite{BL} (see also the references therein)
where a full characterization has been obtained for estimates without loss, i.e $h_b: A_\alpha^p\mapsto A_\alpha^q$ with $1\le p\le q<\infty$.
The estimations with loss (i.e. the case $p>q$) were recently handled in \cite{PauZhao2} closing the question for the unit ball. Note that
to deal with this last case, the authors of \cite{PauZhao2} for the necessary part used an approach due to Luecking for Carleson
embeddings \cite{Luecking1}. We are
interested here in the question of the boundedness of $h_b$
from the Bergman space $A^p_\nu(\mathcal D)$ into the Besov
space $\mathbb B^q_\nu(\mathcal D)$.  For the case of estimations with loss,
we also use an adaptation of Luecking techniques for the necessary part as in \cite{PauZhao2}.
Let us denote $$q_\nu=1+\frac{\nu}{\frac{n}{r}-1},\,\,\,\textrm{and}\,\,\,\tilde
q_\nu=\frac{\nu+2\frac{n}{r}-1}{\frac{n}{r}-1}.$$ We shall the prove the following.
\begin{theorem}\label{hankel 1} 
Let $q'_\nu<p<\tilde{q_\nu}$ and $\nu>\frac{n}{r}-1$. Then the Hankel
operator $h_b$ is bounded from $A^p_\nu(\mathcal D)$ into
$\mathcal B^p_\nu(\mathcal D)$ if and only if $b=P_\nu g$ for some $g\in L^\infty (\mathbb D)$.
\end{theorem}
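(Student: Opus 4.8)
The plan is to pass from the operator $h_b$ to its associated bilinear form and then exploit the duality $\mathbb B^q_\nu(\mathcal D)=(A^{q'}_\nu(\mathcal D))^*$ recorded in the introduction, with the adapted pairing $\langle F,h\rangle_\nu=\int_{\mathcal D}F\,\overline h\,dV_\nu$. Since $P_\nu$ is self-adjoint on $L^2_\nu$ and reproduces analytic functions, for $f\in\mathcal H^\infty(\mathcal D)$ and $h\in A^{p'}_\nu(\mathcal D)$ one has
\[
\langle h_bf,h\rangle_\nu=\langle P_\nu(b\overline f),h\rangle_\nu=\int_{\mathcal D}b\,\overline{f}\,\overline{h}\,dV_\nu=:B(f,h).
\]
Because $\tfrac1p+\tfrac1{p'}=1$, Hölder's inequality in $dV_\nu$ gives $\|fh\|_{1,\nu}\le\|f\|_{p,\nu}\|h\|_{p',\nu}$, so every product $fh$ lies in $A^1_\nu(\mathcal D)$; note also that $B$ is symmetric in $f$ and $h$. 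By the duality above, $h_b\colon A^p_\nu\to\mathbb B^p_\nu$ is bounded if and only if $B$ is bounded on $A^p_\nu\times A^{p'}_\nu$. This reduction is the common starting point for both implications.

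For sufficiency I would assume $b=P_\nu g$ with $g\in L^\infty(\mathcal D)$. Since $fh\in A^1_\nu$ is analytic, the reproducing property $P_\nu(fh)=fh$ (valid in the stated range of $p$) together with self-adjointness yields
\[
B(f,h)=\int_{\mathcal D}(P_\nu g)\,\overline{fh}\,dV_\nu=\int_{\mathcal D}g\,\overline{P_\nu(fh)}\,dV_\nu=\int_{\mathcal D}g\,\overline{fh}\,dV_\nu.
\]
Hence $|B(f,h)|\le\|g\|_\infty\|fh\|_{1,\nu}\le\|g\|_\infty\|f\|_{p,\nu}\|h\|_{p',\nu}$, and the reduction gives $\|h_b\|\lesssim\|g\|_\infty$. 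The upper constraint $p<\tilde q_\nu$ is precisely what guarantees that $fh$ falls in the range where $P_\nu$ reproduces and the manipulations converge.

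For necessity, boundedness of $B$ says exactly that the functional $\Lambda(F)=\int_{\mathcal D}\overline b\,F\,dV_\nu$, which satisfies $\Lambda(fh)=\overline{B(f,h)}$, is bounded on products with norm controlled by $\inf\sum_j\|f_j\|_{p,\nu}\|h_j\|_{p',\nu}$ over representations $F=\sum_j f_jh_j$. The key input is a weak factorization $A^1_\nu(\mathcal D)=A^p_\nu(\mathcal D)\odot A^{p'}_\nu(\mathcal D)$: every $F\in A^1_\nu$ admits such a representation with $\sum_j\|f_j\|_{p,\nu}\|h_j\|_{p',\nu}\lesssim\|F\|_{1,\nu}$. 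Granting this, $\Lambda$ extends to a bounded functional on $A^1_\nu$, so by the identification $(A^1_\nu(\mathcal D))^*=\mathbb B(\mathcal D)=P_\nu(L^\infty(\mathcal D))$ (the Bergman projection maps $L^\infty$ onto the Bloch space), the analytic function $b$ must itself be of the form $P_\nu g$ with $g\in L^\infty$, as claimed.

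The main obstacle is the weak factorization, which is genuinely the heart of the matter: via the reduction and the dual pairing the theorem is essentially equivalent to it, and this is the source of the weak-factorization corollaries announced in the abstract. I would prove it from the atomic decomposition of $A^1_\nu(\mathcal D)$ on a lattice $\{z_j\}$ adapted to the Bergman metric, writing each atom as a product of a power $K_\nu(\cdot,z_j)^{\theta}$ and a complementary power $K_\nu(\cdot,z_j)^{1-\theta}$ and choosing the splitting exponent $\theta$ so that the two families land, with summable norms, in $A^p_\nu$ and $A^{p'}_\nu$ respectively; assembling these through the standard Coifman--Rochberg bilinear scheme produces the factorization. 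It is precisely here that the hypotheses $q'_\nu<p<\tilde q_\nu$ enter: they are the exponent window in which both kernel powers are integrable against the respective weights and the atomic series converge, and I expect that balancing these competing integrability constraints, together with reconciling the naive $L^2_\nu$ pairing with the $\Box^{k_0}$-pairing defining the Besov duality, will be the most delicate part of the argument.
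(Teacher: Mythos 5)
Your reduction of the boundedness of $h_b$ to the boundedness of the bilinear form $B(f,h)=\langle b, fh\rangle$ is exactly the paper's starting point for the necessity direction (the paper writes $\langle h_bf,g\rangle_{\nu,m}=\langle b,fg\rangle_{\nu,m}$), but from there the two arguments diverge completely. For sufficiency the paper does not pass through duality at all: it computes $\Delta^m\Box^m h_b(f)=T_{\nu,m}\bigl(\overline f\,\Delta^m\Box^m b\bigr)$ via the integration-by-parts formula, notes that $b=P_\nu g$ with $g\in L^\infty$ forces $\Delta^m\Box^m b=T_{\nu,m}g\in L^\infty$, and then invokes the $L^p_\nu$-boundedness of the Bergman-type operator $T_{\nu,m}$ (this is where $p>q_\nu'$ is used). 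Your version, which reproduces $fh$ through $P_\nu$ and bounds $\int g\,\overline{fh}\,dV_\nu$ by $\|g\|_\infty\|f\|_{p,\nu}\|h\|_{p',\nu}$, is workable but must be run through the $\Box^m$-pairing of Lemma 3.14 rather than the naive $L^2_\nu$ pairing (for $p>2$ the space $\mathbb B^p_\nu$ is not $A^p_\nu$ and the identity $\langle h_bf,h\rangle_{\nu,m}=B(f,h)$ is precisely the integration-by-parts step you would otherwise be avoiding); once that is done the two sufficiency proofs are of comparable difficulty.

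The necessity direction is where there is a genuine gap. You route everything through the weak factorization $A^1_\nu(\mathcal D)=A^p_\nu(\mathcal D)\odot A^{p'}_\nu(\mathcal D)$ and the identification $(A^1_\nu)^*=\mathbb B^\infty=P_\nu(L^\infty)$, and you yourself identify the factorization as ``the heart of the matter'' while only sketching it. Two problems: first, the atomic decomposition you propose to build it from (Theorem 3.24 of the paper) is stated and proved only for $1<p<\infty$; an $A^1_\nu$ atomic decomposition in tube domains over symmetric cones exists in the literature but is not among the tools of this paper, so as written the key lemma is imported, not proved. Second, the detour is unnecessary: the paper obtains necessity by testing the inequality $|\langle b,fg\rangle_{\nu,m}|\le C\|f\|_{p,\nu}\|g\|_{p',\nu}$ on the single explicit pair $f_w(z)=\Delta^{-(\nu+\frac m2)}\bigl(\frac{z-\overline w}i\bigr)$, $g_w(z)=\Delta^{-(\frac nr+\frac m2)}\bigl(\frac{z-\overline w}i\bigr)$, whose norms are computed by Lemma 3.12; this yields $\sup_w\Delta^m(\im w)|\Box^m b(w)|\le C$ pointwise, and then $b=P_\nu(\Delta^m\Box^m b)$ follows from the reproducing formula (3.10) with $\ell=0$, using $b\in A^2_\nu$ to make sense of $P_\nu h$. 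So while your architecture is coherent and would presumably close up given a $p=1$ atomic decomposition, the proof as proposed is incomplete at its central step, and the exponent window $q_\nu'<p<\tilde q_\nu$ enters the actual argument through the boundedness of $T_{\nu,m}$ and the validity of the reproducing formulas, not through the competing kernel-integrability constraints of a Coifman--Rochberg splitting.
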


\begin{theorem}\label{hankel 2}
Let $1\le p<q<\infty$ and $\alpha, \beta, \nu>\frac{n}{r}-1$, $\frac{1}{p}+\frac{1}{p'}=\frac{1}{q}+\frac{1}{q'}=1$.  Define
$\beta'=\beta+(\nu-\beta)q'$; $\frac{1}{p}+\frac{1}{q'}=\frac{1}{s}<1$; $\frac{\alpha}{p}+\frac{\beta'}{q'}=\frac{\gamma}{s}$; $\frac{1}{s}+\frac{1}{s'}=1$ and $\frac{\gamma}{s}+\frac{\mu}{s'}=\nu$. Assume moreover that
$$\max\{1,\frac{\beta+\frac{n}{r}-1}{\nu},\frac{\beta-\frac{n}{r}+1}{\nu-\frac{n}{r}+1}\}<q<\infty.$$
Then the following hold.
\begin{itemize}
\item[(i)]  $h_b^{(\nu)}$ extends into a bounded operator from $A_\alpha^p(\mathcal{D})$ to $\mathbb{B}_\beta^q(\mathcal{D})$.
\item[(ii)] For some integer $m$ large enough,
$$\Delta^{m+\frac{1}{s'}(\mu+\frac{n}{r})}\Box^mb\in
L^\infty(\mathcal{D}).$$
\end{itemize}
\end{theorem}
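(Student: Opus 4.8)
The plan is to read (i) and (ii) as equivalent and to establish each implication by turning the operator bound into a bilinear estimate. From $h_b^{(\nu)}f=P_\nu(b\overline f)$ and the self-adjointness of $P_\nu$ on $L^2_\nu(\mathcal D)$, for holomorphic $f,g$ one has the pairing
\[
\langle h_b^{(\nu)}f,\,g\rangle_\nu=\int_{\mathcal D} b(w)\,\overline{f(w)g(w)}\,dV_\nu(w).
\]
Invoking the duality $\mathbb B_\beta^q(\mathcal D)=\bigl(A_{\beta'}^{q'}(\mathcal D)\bigr)^*$ with $\beta'=\beta+(\nu-\beta)q'$ (this is why that hypothesis, together with $\beta'>\tfrac nr-1$, appears), boundedness of $h_b^{(\nu)}\colon A_\alpha^p\to\mathbb B_\beta^q$ is equivalent to
\[
\Bigl|\int_{\mathcal D} b\,\overline{fg}\,dV_\nu\Bigr|\le C\,\|f\|_{A_\alpha^p}\,\|g\|_{A_{\beta'}^{q'}},\qquad f\in A_\alpha^p,\ g\in A_{\beta'}^{q'}.
\]
Since $\tfrac1p+\tfrac1{q'}=\tfrac1s$ and $\tfrac\alpha p+\tfrac{\beta'}{q'}=\tfrac\gamma s$, the weight splits as $\Delta^{\gamma-\frac nr}=\Delta^{(\alpha-\frac nr)s/p}\,\Delta^{(\beta'-\frac nr)s/q'}$, and Hölder with the conjugate pair $(p/s,q'/s)$ (both $>1$ as $s<p,q'$) yields the product estimate $\|fg\|_{A_\gamma^s}\le\|f\|_{A_\alpha^p}\|g\|_{A_{\beta'}^{q'}}$. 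Because $p<q$ forces $\tfrac1s=1+\tfrac1p-\tfrac1q>1$, the exponent $s$ is $<1$, so $A_\gamma^s$ is a quasi-Banach Bergman space; the bilinear form then factors through it, and matters reduce to controlling the single functional $h\mapsto\int_{\mathcal D} b\,\overline h\,dV_\nu$ on $A_\gamma^s$.

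For the sufficiency $(ii)\Rightarrow(i)$ I would identify this functional through the Box-operator duality already built into the Besov norm. Writing $\nu=\tfrac\gamma s+\tfrac\mu{s'}$, an integration by parts against $\Box^m$ (legitimate for $m$ large after the usual density reduction) rewrites $\int b\,\overline h\,dV_\nu$ as a pairing of $\Box^m b$ with $h$ carrying the balanced weight $\Delta^{m+\frac1{s'}(\mu+\frac nr)}$. For $0<s<1$ the dual of $A_\gamma^s$ is exactly a weighted Bloch space, so this functional is bounded precisely when $\Delta^{m+\frac1{s'}(\mu+\frac nr)}\Box^m b\in L^\infty(\mathcal D)$, which is condition (ii). The Forelli--Rudin type kernel estimates valid in the tube setting (cf.\ \cite{FK}, \cite{BBGRS}) ensure convergence of these integrals and that $m$ may be taken large enough; the upper bounds on $q$ (the $\max$-condition) are what keep the exponents $\gamma,\mu$ admissible and the spaces $A_\gamma^s$, $A_{\beta'}^{q'}$ nontrivial.

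For the necessity $(i)\Rightarrow(ii)$ — the step I expect to be the main obstacle — I would test the bilinear form on products of powers of the Bergman kernel. Fixing $z_0\in\mathcal D$, I would split a normalized reproducing kernel for $\Box^m b(z_0)$ as a product $f_{z_0}g_{z_0}$ with
\[
f_{z_0}(w)=c_1\,\Delta^{\theta_1}(\im z_0)\,\Delta^{-a}\!\bigl((w-\overline{z_0})/i\bigr),\qquad g_{z_0}(w)=c_2\,\Delta^{\theta_2}(\im z_0)\,\Delta^{-b}\!\bigl((w-\overline{z_0})/i\bigr),
\]
the exponents chosen so that $\|f_{z_0}\|_{A_\alpha^p}\approx\|g_{z_0}\|_{A_{\beta'}^{q'}}\approx1$ uniformly in $z_0$; the parameter identities for $s,\gamma,\mu$ are exactly what make this normalization possible. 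The product $\overline{f_{z_0}g_{z_0}}\,dV_\nu$ then acts as an approximate $\Box^m$-reproducing functional at $z_0$, giving $\int b\,\overline{f_{z_0}g_{z_0}}\,dV_\nu\approx\Delta^{m+\frac1{s'}(\mu+\frac nr)}(\im z_0)\,\Box^m b(z_0)$, so the assumed operator bound forces this quantity to be $\le C$ uniformly, i.e.\ (ii). The delicate points are the precise calibration of $a,b,\theta_1,\theta_2$ and the justification that differentiating $m$ times under the integral reproduces $\Box^m b(z_0)$ rather than $b(z_0)$; here the derivative structure of the kernel and the automorphism-invariance of the Bergman balls $B_\delta(z)$ enter, as in \cite{NaSeh} and \cite{PauZhao2}.
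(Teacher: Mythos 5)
Your necessity argument (testing the bilinear form $\langle b,fg\rangle_{\nu,m}$ on products of two kernel powers calibrated so that $\|f_{z_0}\|_{A_\alpha^p}$ and $\|g_{z_0}\|_{A_{\beta'}^{q'}}$ are normalized, then using the reproducing formula to recover $\Delta^{m+\frac1{s'}(\mu+\frac nr)}(\im z_0)\,\Box^m b(z_0)$) is essentially the paper's proof of (i)$\Rightarrow$(ii), and in outline it is sound. The sufficiency direction, however, has a genuine gap. You reduce everything to the functional $h\mapsto\int_{\mathcal D}b\,\overline h\,dV_\nu$ on $A_\gamma^s$ with $s<1$ and then assert that ``for $0<s<1$ the dual of $A_\gamma^s$ is exactly a weighted Bloch space.'' No such duality theorem is available here: the paper's duality results (Lemma \ref{A1}, Lemma \ref{lem:ApBp0}, Theorem \ref{theo:dualitychange}) all concern exponents in $[1,\infty)$, and identifying $(A_\gamma^s)^*$ for $s<1$ on a tube domain over a symmetric cone is not a routine fact one can cite -- already the $s=1$ case is a substantial theorem of B\'ekoll\'e. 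Moreover, even granting a description of that dual, your route needs the surjective representation $(A_{\beta'}^{q'})^*=\mathbb B_\beta^q$ to convert ``bilinear form bounded'' back into ``$h_b^{(\nu)}f\in\mathbb B_\beta^q$''; Lemma \ref{lem:ApBp0} supplies this only for $1<q'\le 2$, i.e.\ $q\ge 2$, so for $1\le q<2$ the reduction itself is not justified.

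The paper's sufficiency proof avoids duality altogether and is the step you are missing: write $\Box^m h_b^{(\nu)}f=P_{\nu+m}(T_b f)$ with $T_b f=C\,\overline f\,\Box^m b$, use hypothesis (ii) to bound $|\Box^m b(z)|\lesssim \Delta^{-m-\frac1{s'}(\mu+\frac nr)}(\im z)$ pointwise, and then invoke the embedding $A_\alpha^p\hookrightarrow L^q_{\vartheta}$ of Lemma \ref{lem:esti} with $\vartheta=-\frac q{s'}(\mu+\frac nr)+\beta$ (this is precisely where $p<q$ enters, since $\frac1q(\vartheta+\frac nr)=\frac1p(\alpha+\frac nr)$) to get $\|T_bf\|_{L^q_{\beta+mq}}\lesssim\|f\|_{p,\alpha}$; finally Lemma \ref{lem:bergtype} and the hypothesis on $q$ give boundedness of $P_{\nu+m}$ on $L^q_{\beta+mq}$ for $m$ large, so $\Box^m h_b^{(\nu)}f\in A^q_{\beta+mq}$, i.e.\ $h_b^{(\nu)}f\in\mathbb B^q_\beta$. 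I would replace your quasi-Banach duality step by this direct factorization; as written, the sufficiency half of your proposal does not go through.
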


\begin{theorem}\label{hankel 3}
 Let $2\leq q<\infty$ and $\alpha, \beta, \nu>\frac{n}{r}-1$, $\frac{1}{p}+\frac{1}{p'}=\frac{1}{q}+\frac{1}{q'}=1$.  Define
$\beta'=\beta+(\nu-\beta)q'$; $\frac{1}{p}+\frac{1}{q'}=\frac{1}{s}<1$; $\frac{\alpha}{p}+\frac{\beta'}{q'}=\frac{\gamma}{s}$; $\frac{1}{s}+\frac{1}{s'}=1$ and $\frac{\gamma}{s}+\frac{\mu}{s'}=\nu$.
Assume that
\begin{equation}
\frac{\beta-\frac{n}{r}+1}{\nu-\frac{n}{r}+1}<q
\end{equation}
and
 \begin{equation}\label{mugrand}
\frac{1}{p}(\alpha-\frac{n}{r}+1)+\frac{1}{q'}(\beta'-\frac{n}{r}+1)<\nu-\frac{n}{r}+1.
\end{equation}
Then the followig assertions hold.
\begin{itemize}
\item[(i)] If $b$ is the representative of a class in $\mathbb{B}_\mu^{s'}$, then the Hankel operator $h_b^{(\nu)}$ extends into a bounded operator from $A_\alpha^p(\mathcal D)$ to $\mathbb B^q_\beta(\mathcal D)$.
\item[(ii)] If there exists $\sigma>\frac{n}{r}-1$ such that $P_\sigma$ is bounded on $L_\alpha^p(\mathcal{D})$ and if $h_b^{(\nu)}$ extends into a bounded operator from $A_\alpha^p(\mathcal D)$ to $\mathbb B^q_\beta(\mathcal D)$, then $b$ is the representative of a class in $\mathbb{B}_\mu^{s'}$.
\end{itemize}
\end{theorem}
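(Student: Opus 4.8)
The plan is to run both implications through the $\nu$-pairing $\langle F,G\rangle_\nu=\int_{\mathcal D}F\overline G\,dV_\nu$ together with the two duality identifications $\mathbb B^q_\beta(\mathcal D)=(A^{q'}_{\beta'}(\mathcal D))^*$ and $(A^s_\gamma(\mathcal D))^*=\mathbb B^{s'}_\mu(\mathcal D)$, the weight‑shifted refinements of the duality $\mathbb B^q_\nu=(A^{q'}_\nu)^*$ recorded in the introduction. First I would verify that the standing hypotheses are exactly the nontriviality constraints that make these dualities available. Using $1-q'=-q'/q$ one has $\beta'=q'(\nu-\tfrac\beta q)$, so the assumption $\frac{\beta-\frac nr+1}{\nu-\frac nr+1}<q$ is equivalent to $\beta'>\frac nr-1$, making $A^{q'}_{\beta'}$ nontrivial; and since $\frac1s(\gamma-\frac nr+1)=\frac1p(\alpha-\frac nr+1)+\frac1{q'}(\beta'-\frac nr+1)$, hypothesis (\ref{mugrand}) together with $\frac\gamma s+\frac\mu{s'}=\nu$ gives $\mu>\frac nr-1$, so $A^s_\gamma$ and its dual $\mathbb B^{s'}_\mu$ are nontrivial. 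The algebraic heart of the matter is that the small Hankel operator is symmetric in this pairing: for holomorphic $f,g$, self‑adjointness of $P_\nu$ on $L^2_\nu$ and $P_\nu g=g$ give $\langle h_b^{(\nu)}f,g\rangle_\nu=\langle P_\nu(b\overline f),g\rangle_\nu=\int_{\mathcal D}b\,\overline{fg}\,dV_\nu$.

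For the sufficiency (i) I would combine this identity with a Hölder inequality for Bergman spaces. Pointwise multiplication sends $A^p_\alpha\times A^{q'}_{\beta'}$ into $A^s_\gamma$: since $\frac sp+\frac s{q'}=1$ and the factorization $\Delta^{\gamma-\frac nr}=\Delta^{\frac sp(\alpha-\frac nr)}\Delta^{\frac s{q'}(\beta'-\frac nr)}$ is legitimate because $\frac\gamma s=\frac\alpha p+\frac{\beta'}{q'}$, Hölder with exponents $p/s$ and $q'/s$ yields $\|fg\|_{A^s_\gamma}\le\|f\|_{A^p_\alpha}\|g\|_{A^{q'}_{\beta'}}$. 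As $b$ represents a class in $\mathbb B^{s'}_\mu=(A^s_\gamma)^*$, the symmetry identity gives $\bigl|\langle h_b^{(\nu)}f,g\rangle_\nu\bigr|=\bigl|\int_{\mathcal D}b\,\overline{fg}\,dV_\nu\bigr|\le\|b\|_{\mathbb B^{s'}_\mu}\|fg\|_{A^s_\gamma}\le\|b\|_{\mathbb B^{s'}_\mu}\|f\|_{A^p_\alpha}\|g\|_{A^{q'}_{\beta'}}$. Taking the supremum over $g$ in the unit ball of $A^{q'}_{\beta'}$ and invoking $\mathbb B^q_\beta=(A^{q'}_{\beta'})^*$ gives $\|h_b^{(\nu)}f\|_{\mathbb B^q_\beta}\lesssim\|b\|_{\mathbb B^{s'}_\mu}\|f\|_{A^p_\alpha}$, which is the claimed boundedness.

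For the necessity (ii) the same duality shows that boundedness of $h_b^{(\nu)}$ is equivalent to $b$ defining a bounded functional on the set of products $\{fg:f\in A^p_\alpha,\ g\in A^{q'}_{\beta'}\}$, so the task is to upgrade this to boundedness on all of $A^s_\gamma$, i.e. to establish the weak factorization $A^s_\gamma=A^p_\alpha\cdot A^{q'}_{\beta'}$ with norm control. Here I would follow Luecking's scheme as adapted in \cite{PauZhao2}. Fix a lattice $\{z_j\}$ adapted to the Bergman metric with an associated covering by balls $B_\delta(z_j)$, and reduce $\|b\|_{\mathbb B^{s'}_\mu}$ (for $m$ large) to the discrete $\ell^{s'}$ sum $\bigl(\sum_j|\Delta^m\Box^m b(z_j)|^{s'}\,\Delta^{\mu}(\im z_j)\bigr)^{1/s'}$. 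By $\ell^{s'}$–$\ell^s$ duality this equals a supremum over $\{\lambda_j\}\in\ell^s$ of an expression $\sum_j\lambda_j\,\Delta^m\Box^m b(z_j)\,\Delta^{\cdots}(\im z_j)$; I would then realize $\{\lambda_j\}$ through test functions $f=\sum_j c_j\varepsilon_j k_{z_j}$ and $g=\sum_j d_j\varepsilon_j' k'_{z_j}$ built from normalized reproducing kernels, with $\varepsilon_j,\varepsilon_j'$ Rademacher signs and $c_jd_j$ chosen to reproduce $\lambda_j$. The hypothesis that $P_\sigma$ is bounded on $L^p_\alpha$ is precisely what controls the synthesis, giving $\|f\|_{A^p_\alpha}\lesssim\|\{c_j\}\|_{\ell^p}$ and the analogous bound for $g$. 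Feeding $f,g$ into the operator estimate $\bigl|\int_{\mathcal D}b\,\overline{fg}\,dV_\nu\bigr|\le\|h_b^{(\nu)}\|\,\|f\|_{A^p_\alpha}\|g\|_{A^{q'}_{\beta'}}$ and averaging over the signs via Khinchine's inequality collapses the double sum to its diagonal and produces the desired discrete bound, whence $b\in\mathbb B^{s'}_\mu$.

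The main obstacle is this last step. Two points require care: controlling the off‑diagonal contributions $j\neq k$ in $\int_{\mathcal D}b\,\overline{fg}\,dV_\nu$, which rests on the decay of $K_\nu(z_j,z_k)$ in the Bergman metric together with the pointwise kernel estimates on $\mathcal D$; and checking that the Rademacher/Khinchine averaging is compatible with the three exponents $p,q',s'$ — this is where $q\geq2$ enters, since it secures the relevant square‑function comparison on the target side. The constant $\sigma$ and the boundedness of $P_\sigma$ serve only to make the kernel test functions genuine elements of $A^p_\alpha$ with the correct norm growth; once the synthesis and the off‑diagonal estimates are in place, the equivalence of the discrete and continuous Besov norms closes the argument.
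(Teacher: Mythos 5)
Your proof of part (i) is essentially the paper's: pair against $g\in A^{q'}_{\beta'}$, use the symmetry of the Hankel form to reduce to $\int_{\mathcal D}b\,\overline{fg}$, split off the Besov norm of $b$ by H\"older with exponents $s,s'$, and control $\|fg\|_{s,\gamma}$ by $\|f\|_{p,\alpha}\|g\|_{q',\beta'}$ via the pointwise factorization of $\Delta^{\gamma-\frac nr}$. The only caveat is that $b$ is merely a representative of a class in $\mathbb B^{s'}_\mu$, so the literal integral $\int b\,\overline{fg}\,dV_\nu$ need not make sense; you must first apply the integration-by-parts formula (\ref{parts}) to replace it by $c\int\Box^m b\,\overline{fg}\,\Delta^m\,dV_{\nu}$ and then run H\"older against $\|\Delta^m\Box^m b\|_{s',\mu}$, which is exactly how the paper phrases it with the pairing $\langle\cdot,\cdot\rangle_{\nu,m}$.

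For part (ii) you have assembled the right toolbox (atomic decomposition through $P_\sigma$, Rademacher signs, Khinchine, $q\ge 2$ for the square-function step, lattice discretization), but the implementation you sketch is not the paper's and leaves a real hole that you yourself flag: by randomizing \emph{both} test functions $f=\sum_jc_j\varepsilon_jk_{z_j}$ and $g=\sum_jd_j\varepsilon_j'k_{z_j}'$ and feeding them into $\int b\,\overline{fg}$, you create a double sum whose off-diagonal terms $j\neq k$ must be controlled by kernel decay, and you never carry this out. The paper avoids the issue entirely: it randomizes only $f_t=\sum_j\lambda_jr_j(t)K_\sigma(\cdot,z_j)\Delta^{\sigma+\frac nr}(\im z_j)$ and measures $\|\Box^m h_b^{(\nu)}f_t\|_{q,\beta+mq}$ directly in the target norm; Khinchine then gives a \emph{lower} bound by the square function inside the integral, the inequality $\sum_j|a_j|^q\chi_{B_j}\le(\sum_j|a_j|^2\chi_{B_j})^{q/2}$ (valid since $q\ge2$) restricts to the diagonal with no off-diagonal terms ever appearing, and the mean-value inequality recovers the lattice values $\Box^m h_b^{(\nu)}K_\sigma(z_j,z_j)=C_mT^m_{\sigma,\nu}b(z_j)$. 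A second, smaller gap is your closing step ``equivalence of the discrete and continuous Besov norms'': what the Luecking argument yields is an $\ell^{s'}$ bound on the transform $T^m_{\sigma,\nu}b$ at the lattice points, not on $\Delta^m\Box^m b$ itself, and converting this into $b\in\mathbb B^{s'}_\mu$ requires a separate duality argument using the atomic decomposition of the predual (the paper's Lemma \ref{lem:boxdiscrit}), not a mere sampling equivalence. Finally, note that the weak factorization $A^s_\gamma=A^p_\alpha\bigotimes A^{q'}_{\beta'}$ is deduced in the paper \emph{from} this theorem, so invoking it as the target of the necessity proof would be circular if taken literally.
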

\vskip .2cm

Given a function $f$ in a Bergman space of some domain,
if we can write $f$ as $f=gh$ where $g$ and $h$ are in some different Bergman spaces,
then we say $f$ admits a strong factorization. C. Horowitz has proved that this is the case for
Bergman spaces of the unit disc \cite{Horowitz}. This does not longer happen in higher dimension as proved in \cite{Gowda}. 
However, it is still possible to obtain the so-called weak factorization.

For two Banach spaces of functions $A$ and $B$ defined on the same domain, the weak factored space $A\bigotimes B$ is defined as the completion of finite sums $$f=\sum_j g_j h_j, \{g_j\}\subset A, \{h_j\}\subset B$$
endowed with the following norm  $$\|f\|_{A\bigotimes B}=\inf\{\sum_j\|g_j\|_{A}\|h_j\|_{B}: f=\sum_j g_j h_j\}.$$
Given $0< p, q,s <\infty$ and $\alpha,\beta,\gamma$, whenever
the equality $A_\gamma^s(\mathcal D)=A_\alpha^p(\mathcal D)\bigotimes A_\beta^q(\mathcal D)$ holds,
we say $A_\gamma^s(\mathcal D)$ admits a weak factorization. In the case of the unit ball $\mathbb B_n$ of $\mathbb C^n$, that
weak factorization holds for Bergman spaces with small exponent (i.e. $0<s\le 1$) is a consequence of the atomic decomposition of these spaces.
This result was quite recently extended to large exponents ($s>1$) by J. Pau and R. Zhao in \cite{PauZhao2} as a consequence
of estimations with loss for the Hankel operators. As a consequence of our characterization of bounded Hankel operators, we have the following result for weighted Bergman spaces of our setting. 
\btheo\label{thm:weakfact1}
 Let $1<p, q<\infty$ and $\alpha,\beta,\nu>\frac nr-1$ so that $$\frac{1}{p}(\alpha-\frac{n}{r}+1)+\frac{1}{q}(\beta-\frac{n}{r}+1)<\nu-\frac{n}{r}+1$$ holds.
Define $\frac{1}{s}=\frac{1}{p}+\frac{1}{q}<1$ and $\frac{\gamma}{s}=\frac{\alpha}{p}+\frac{\beta}{q}$. Assume that $P_\sigma$ is bounded on $A_\alpha^p(\mathcal{D})$ for some $\sigma>\frac{n}{r}-1$, and that 
\begin{equation}\label{weakfactcond1}
\max\{q_\gamma',\frac{\gamma'+\frac{n}{r}-1}{\nu},\frac{\gamma'-\frac{n}{r}+1}{\nu-\frac{n}{r}+1}\}<s<q_\gamma\,\,\,\textrm{and}\,\,\,\max\{q_\beta',\frac{\beta+\frac{n}{r}-1}{\nu},\frac{\beta-\frac{n}{r}+1}{\nu-\frac{n}{r}+1}\}< q<q_\beta;
\end{equation}
or
\begin{equation}\label{weakfactcond2}
1<s<2\,\,\,\textrm{and}\,\,\,1<q\leq 2.
\end{equation}
Then 
 $$A_\gamma^s(\mathcal{D})=A_\alpha^p(\mathcal{D})\bigotimes A_\beta^q(\mathcal{D}).$$
\etheo
We also have the following when the weights are the same.
\btheo\label{thm:weakfact2}
Let $1<p,q<\infty$ and $\nu>\frac{n}{r}-1$. Assume that $P_\nu$ is bounded on both $L_\nu^p(\mathcal{D})$ and $L_\nu^q(\mathcal{D})$, and put $\frac{1}{s}=\frac{1}{p}+\frac{1}{q}<1$. Then 

$$A_\nu^s(\mathcal{D})=A_\nu^p(\mathcal{D})\bigotimes A_\nu^q(\mathcal{D}).$$

\etheo

As usual, given two positive quantities $A$ and $B$, the notation $A\lesssim B$ (resp. $A\gtrsim B$) means that there is an absolute
positive constant $C$ such that $A\le CB$ (resp. $A\ge CB$). When $A\lesssim B$ and $B\lesssim A$, we write $A\simeq B$ and say $A$
and $B$ are equivalent. Finally, all over the text,  $C$, $C_k$, $C_{k,j}$ will
denote positive constants depending only on the displayed
parameters but not necessarily the same at distinct
occurrences.

\section{Some useful notions and results}
\setcounter{equation}{0}
\subsection{Symmetric cones and Bergman metric}
Let $\Omega$ be an irreducible symmetric cone in the vector space $V\equiv \mathbb R^n$. Denote by $G(\Omega)$ be the group of transformations of $\mathbb R^n$ leaving invariant the cone $\Omega$, and by $e$ the identity element in
$V$. We recall that $\Omega$ induces in $V$ a structure of Euclidean Jordan algebra, in which $\overline {\Omega}=\{x^2:x\in V\}$.  
It is well known that there is a subgroup $H$ of $G(\Omega)$ that acts simply transitively on $\Omega$, that is for $x,y\in\Omega$ there is a unique $h\in H$ such that $y=hx.$ In particular, $\Omega\equiv H\cdot e$.

If we denote by $\mathbb R^n$ the group of translation by vectors in $\mathbb R^n$, then the group $G(\mathcal D)=\mathbb {R}^n\times H$ acts simply transitively on $\mathcal D$.

\vskip .2cm
Let us consider the matrix function $\{g_{jk}\}_{1\leq j,k\leq n}$ on $\mathcal D$ given by
$$g_{jk}(z)=\frac{\partial^2}{\partial z_j\partial\bar z_k}\log K(z,z)$$
where $K(w,z)=c(n/r)\Delta^{-\frac{2n}{r}}(\frac{w-\overline {z}}{i})$ is the (unweighted) Bergman kernel of $\mathcal D$. The map $z\in \mathcal {D}\mapsto \mathcal G_z$ with
$$\mathcal G_z(u,v)=\sum_{1\leq j,\,k\leq n}g_{jk}(z)u_j{\bar v}_k,\quad u=(u_1,\ldots,u_n),\,\,v=(v_1,\dots,v_n)\in \mathbb C^n$$
defines a Hermitian metric on $\mathbb C^n,$ called the Bergman metric. The Bergman length of a smooth path $\gamma:[0,1]\to \mathcal D$ is given by
$$l(\gamma)=\int_0^1\left(\mathcal G_{\gamma(t)}(\dot\gamma(t),\dot\gamma(t))\right)^\frac{1}{2}dt$$
and the Bergman distance $d(z_1,z_2)$ between two points $z_1, z_2\in \mathcal D$ is defined by
$$d(z_1,z_2)=\inf_\gamma l(\gamma)$$
where the infimum is taken over all smooth paths $\gamma:[0,1]\to \mathcal{D}$ such that $\gamma(0)=z_1$ and $\gamma(1)=z_2.$

For $\delta>0$, we denote by $$B_\delta(z)=\{w\in \mathcal {D}: d(z,w)<\delta\}$$ the Bergman ball centered at $z$ with radius $\delta.$

We refer to \cite[Theorem 5.4]{BBGNPR} for the following.
\blem\label{lem:covering}
Given $\delta\in (0, 1)$, there exists a sequence $\{z_j\}$ of points  of $\mathcal D$ called
$\delta$-lattice such that, if $B_j=B_\delta(z_j)$ and $B_j'=B_{\frac{\delta}{2}}(z_j)$,
then
\begin{itemize}
\item[(i)] the balls $B_j'$ are pairwise disjoint;
\item[(ii)] the balls $B_j$ cover $\mathcal D$ with finite overlapping, i.e. there is an integer N (depending only on $\mathcal{D}$) such
that each point of $\mathcal D$ belongs to at most N of these balls.
\end{itemize}
\elem

The above balls have the following
properties:$$\int_{B_j}dV_\nu(z)\approx \int_{B'_j}dV_\nu(z)\approx
C_{\delta}\Delta^{\nu+n/r}(\im z_j).$$ We recall that the
measure $d\lambda(z)=\Delta^{-2n/r}(\im z)dV(z)$ is an
invariant measure on $\mathcal D$ under the actions of $G(\mathcal D)=\mathbb {R}^n\times H$.

\vskip .2cm
Let us denote by $l_{\nu}^{p}$, the space of complex sequences $\beta=\{\beta_j\}_{j\in \N}$ such that
$$||\beta||_{l_\nu^p}^{p}=\sum_{j}
|\beta_{j}|^{p}\Delta^{\nu+\frac{n}{r}}(\im z_j)<\infty,$$
where $\{z_j\}_{j\in \N}$ is a $\delta$-lattice.
\vskip .2cm
The following is quite easy to check.
\blem\label{5.1}\cite[Lemma
2.11.1 ]{G} Suppose $1 \le p < \infty$, $\nu $ and $\mu $
are reals. Then, the dual space $(l_{\nu}^{p})^{*}$ of the space
$l_{\nu}^{p}$ identifies with $l_{\nu + (\mu - \nu)p'}^{p'}$
under the sum pairing $$\langle\eta,\beta\rangle_{\mu}=
\sum_{j}\eta_{j}\overline {\beta_{j}}\Delta^{\mu +
\frac{n}{r}}(\im z_j),$$ where $\eta = \{\eta_{j}\}$ belongs to
$l_{\nu}^{p}$ and $\beta =\{\beta_{j}\}$ belongs to $l_{\nu + (\mu
- \nu)p'}^{p'}$ with $\frac{1}{p} +\frac{1}{p'}=1$.
\elem

Note that in the text, the space $l^p$, is the usual sequence space. 

The following result known as the sampling theorem.
\blem\label{5.3}\cite[Theorem 5.6]{BBGNPR}
Let $\{z_{j}\}_{j\in \N}$ be a $\da$-lattice in $\mathcal D$,
$\da \in (0,1)$ with $z_{j}= x_{j} + iy_{j}$. The following
assertions hold.
\begin{itemize}
\item[(1)] There is a positive
constant $C_{\da}$ such that every $f\in A_{\nu}^{p}(\mathcal D)$ satisfies
$$ ||\{f(z_{j})\}||_{l_\nu^p} \le C_{\da} ||f||_{p,\nu}. $$
\item[(2)] Conversely, if $\da$ is small enough, there is a
positive constant $C_{\da}$ such that every $f\in A_{\nu}^{p}(\mathcal D)$
satisfies $$||f||_{p,\nu} \le
C_{\da}||\{f(z_{j})\}||_{l_\nu^p}.$$
\end{itemize}
\elem
We will need the following consequence of the mean value theorem (see \cite{BBGNPR})
\blem\label{lem:meanvalue}
There exists a constant $C>0$ such that for any $f\in \mathcal {H}(\mathcal D)$ and $\delta\in (0,1]$, the following holds
\begin{equation}\label{eq:meanvalue}
|f(z)|^p\le C\delta^{-n}\int_{B_\delta(z)}|f(\zeta)|^p\frac{dV(\zeta)}{\Delta^{2n/r}(\im \zeta)}.
\end{equation}
\elem
We finish this subsection with the following pointwise estimate of functions in Bergman spaces.
\blem\label{lem:pointwiseestimberg}\cite[Proposition 3.5]{BBGNPR}
Let $1\leq p<\infty$, and $\nu>\frac{n}{r}-1$. Then there is a constant $C>0$ such that for any $f\in A_\nu^p(\mathcal D)$ the following
pointwise estimate holds:
\begin{equation}\label{eq:pointwiseestimberg}|f(z)|\le C\Delta^{-\frac{1}{p}(\nu+\frac{n}{r})}(\im z)\|f\|_{A_\nu^p},\,\,\,\textrm{for all}\,\,\, z\in \mathcal {D}.
\end{equation}
\elem
\subsection{Isomorphism of Besov spaces}
Let us denote by  $\mathbb B^{p,(k)}_\nu(\mathcal D)$
(respectively $\mathbb B^{\infty,(k)}$) the space obtained
by replacing $k_0$ (respectively $m_*$) by $k\ge k_0$
(respectively $k\ge m_*$) in the definition of $\mathbb
B^p_\nu(\mathcal D)$ (respectively $\mathbb
B^{\infty,(m)}$).  Then it is not hard to prove the
following result (see \cite{BBGRS} for details).

\bprop The
natural projection 

\begin{tabular}{lp{2in}}
 & \\
$\mathbb B^{p,(k)}_\nu(\mathcal D)\,\,\,
(\textrm{respectively}\,\,\, \mathbb B^{\infty,(k)})
\longrightarrow \mathbb B^{p,(m)}_\mu(\mathcal D)\,\,\,
(\textrm{respectively}\,\,\, \mathbb B^{\infty,(m)})$\\
 & \\
$F+\mathcal N_k  \longmapsto  F+\mathcal N_m$ is an
isomorphism of Banach spaces.
\end{tabular}
\eprop
We will heavily make use of the above
proposition.

\subsection{Bergman kernel and reproducing formulas}
The (weighted) Bergman projection $P_\nu$ is defined by
$$P_\nu f(z)=\int_{\mathcal D}K_\nu(z, w) f(w)
dV_\nu(w),
$$
where
 $K_\nu(z, w)=
 c_\nu\,\Delta^{-(\nu+\frac{n}{r})}((z-\overline {w})/i)$
is the Bergman kernel, i.e the reproducing kernel of  $A^2_\nu$ (see
\cite{FK}). Here, we use the notation
$dV_\nu(w):=\Delta^{\nu-\frac{n}{r}}(v) du\,dv$, where
$w=u+iv$ is an element of $\mathcal D$.

We recall that the Box operator $\Box$ acts on the Bergman
kernel in the following
way:\begin{equation}\label{boxofkernel}\Box^m{K_\nu(z,.)=C_{\nu,m}}K_{\nu+m}(z,.)\end{equation}
(see \cite{BBPR}).

We will need the following integration by parts formula which
follows from the density of the intersection of two Bergman
spaces in each of them (see \cite{BBGRS}, \cite{BBPR}). For
$\nu>\frac{n}{r}-1$, $1\leq p\leq \infty$ and $f\in
A^p_\nu$, $g\in A^{p'}_\nu$, we have the formula
\begin{equation}\label{parts}
\int_{\mathcal D} f(z)\overline g(z) dV_\nu(z)=c_{\nu,
m}\int_{\mathcal D} f(z)\overline {\Box^m g(z)} \Delta^m(\im z)
dV_\nu(z).
\end{equation}
 Using an
adapted version of the above formula for the Box of
functions, we can prove the following (see \cite{BBGRS} for
details).

\begin{proposition}\label{Repr1}
Let $\nu>\frac nr-1$ and $1\leq p\leq \infty$. For all $f\in
A^p_\nu$ we have the formula
\begin{equation}\label{repr}
  \Box^\ell f(z)=c\int_{\cD} K_{\nu+\ell} (z, w)\Box^m
  f(w)\Delta^m(\im w)dV_\nu(w)
\end{equation}
for $m\geq 0$ and $\ell$ large enough so that $K_{\nu+\ell} (z,
\cdot)$ is in $L^{p'}_\nu$. In particular, when $1\leq p< \tilde
p_\nu$, the formula is valid with $\ell=0$.
\end{proposition}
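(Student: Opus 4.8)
The plan is to obtain \eqref{repr} in two steps. First I would differentiate the reproducing formula for $P_\nu$ so that the operator $\Box^\ell$ lands on the kernel, producing a ``differentiated reproducing formula''; then I would use the integration by parts identity \eqref{parts} to transfer the derivatives off the kernel and onto $f$. Throughout, $z\in\cD$ is fixed, and the convergence of every integral in $w$ is guaranteed by H\"older's inequality together with the standing hypothesis $K_{\nu+\ell}(z,\cdot)\in L^{p'}_\nu$.

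For the first step I would establish, for $f\in A^p_\nu$ and $\ell$ as above,
\[
\Box^\ell f(z)=c_{\nu,\ell}\int_{\cD}K_{\nu+\ell}(z,w)\,f(w)\,dV_\nu(w).
\]
On the dense subspace $A^2_\nu\cap A^p_\nu$ this is immediate: start from $f(z)=\int_{\cD}K_\nu(z,w)f(w)\,dV_\nu(w)$, apply $\Box^\ell$ in the holomorphic variable, differentiate under the integral sign, and use the action of the Box operator on the kernel \eqref{boxofkernel} in the form $\Box^\ell_z K_\nu(z,w)=c_{\nu,\ell}K_{\nu+\ell}(z,w)$. To extend the identity to all of $A^p_\nu$ I would argue by density, checking that both sides depend continuously on $f$ in the $A^p_\nu$-norm: the right-hand side by H\"older's inequality and $K_{\nu+\ell}(z,\cdot)\in L^{p'}_\nu$, and the left-hand side because $\Box^\ell f$ is holomorphic and its pointwise values are controlled by $\|f\|_{A^p_\nu}$ through the mean value inequality of Lemma \ref{lem:meanvalue} together with Lemma \ref{lem:pointwiseestimberg}.

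For the second step, fix $m\ge 0$ and set $g(w)=\overline{K_{\nu+\ell}(z,w)}$, a function holomorphic in $w$ and lying in $A^{p'}_\nu$ by hypothesis, so that $K_{\nu+\ell}(z,w)=\overline{g(w)}$. Applying \eqref{parts} to the pair $(f,g)$ and then moving the Box from $g$ onto $f$ --- this is the ``adapted version'' alluded to in the text, and it is legitimate because $\Box=\Delta(\tfrac1i\tfrac{\partial}{\partial x})$ is a constant-coefficient operator in $x$ that is formally self-adjoint for $dx$ and annihilates the $y$-weight $\Delta^m(\im w)$ --- I obtain
\[
\int_{\cD}K_{\nu+\ell}(z,w)\,f(w)\,dV_\nu(w)=c_{\nu,m}\int_{\cD}K_{\nu+\ell}(z,w)\,\Box^m f(w)\,\Delta^m(\im w)\,dV_\nu(w).
\]
Combining this with the first step gives \eqref{repr} with $c=c_{\nu,\ell}\,c_{\nu,m}$. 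The final assertion is then immediate: for $\ell=0$ the only requirement is $K_\nu(z,\cdot)\in L^{p'}_\nu$, and the standard integrability estimate for powers of the Bergman kernel over $\cD$ (see \cite{FK}, \cite{BBGNPR}) shows this holds exactly when $1\le p<\tpn$.

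I expect the main obstacle to be the first step: justifying the differentiated reproducing formula on all of $A^p_\nu$ rather than on $A^2_\nu$ alone. This requires controlling the differentiation under the integral sign uniformly on compact sets in $z$, and then running the density argument, where the kernel integrability $K_{\nu+\ell}(z,\cdot)\in L^{p'}_\nu$ and the pointwise Bergman estimates (Lemmas \ref{lem:meanvalue} and \ref{lem:pointwiseestimberg}) carry the weight. By comparison the transfer step is essentially formal once \eqref{parts} and the self-adjointness of $\Box$ are granted; the only point requiring care is the conjugation bookkeeping in identifying $g=\overline{K_{\nu+\ell}(z,\cdot)}$ as an element of $A^{p'}_\nu$.
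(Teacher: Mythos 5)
Your two-step argument --- differentiate the reproducing formula and use \eqref{boxofkernel} to get $\Box^\ell f(z)=c\int_{\cD} K_{\nu+\ell}(z,w)f(w)\,dV_\nu(w)$, then invoke the conjugate-symmetric form of the integration-by-parts identity \eqref{parts} (legitimate since $\Box$ is a constant-coefficient, formally self-adjoint operator in $x$ commuting with the $y$-weights) to transfer $\Box^m$ onto $f$ --- is exactly the route the paper indicates, which itself offers no proof beyond the remark that the result follows from ``an adapted version'' of \eqref{parts} together with a citation of \cite{BBGRS}. The only point deserving extra care is $p=\infty$, where norm-density of $A^2_\nu\cap A^p_\nu$ in $A^p_\nu$ fails and the limiting step must instead be run by dilations or weak-$*$ convergence; but this caveat is already implicit in the paper's own statement of \eqref{parts} for $1\leq p\leq\infty$, so your proposal matches the intended proof.
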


\bcor \label{Repr2}Let $1\le p<\tilde p_\nu$ and
$\nu>\frac{n}{r}-1$. Then, any $f\in A^p_\nu$ satisfies the
formula
\begin{equation}\label{repr2}
   f(z)=\int_{\mathcal D} K_{\nu} (z, w)f(w)dV_\nu (w).\end{equation}
\ecor

The following is Proposition 2.19 in \cite{BBGRS}.

\begin{proposition}\label{Repr3}
\label{pro2} Let $\mu,\nu, \alpha \in\mathbb {R}$ and $1\leq p<\infty$
satisfying \[\nu + \alpha > \frac{n}{r}-1,\quad
\nu p-\mu>(p-1)(\frac{n}{r}-1)\mand\mu + \al p >
(p-1)(\frac{n}{r}-1)-\frac{n}{r}.\] Then, the function $z\mapsto
\Delta^{\nu-\mu}(\im z)K_{\nu+\alpha}(z, i\be)\in
L^{p'}_\mu(\mathcal D)$, and for all holomorphic function $f$ such
that the function $z\mapsto \Delta^{\alpha}(\im z)f(z)\in L^p_\mu,$
we have \Be f(z)=\int_{\mathcal D} K_{\nu+\al} (z,
w)\,f(w)\,\Delta^{\al}(\im w)\,dV_\nu(w). \label{rep3} \Ee
\end{proposition}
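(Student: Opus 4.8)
The plan is to read the asserted formula (\ref{rep3}) as an ordinary Bergman reproducing identity for a shifted weight, and then to prove it in two stages: first the quantitative integrability of the kernel, then the identity itself. Observe that $\Delta^{\alpha}(\im w)\,dV_\nu(w)=\Delta^{\nu+\alpha-\frac nr}(\im w)\,dV(w)=dV_{\nu+\alpha}(w)$, so (\ref{rep3}) is precisely the statement that $f$ is reproduced by the weight-$(\nu+\alpha)$ Bergman kernel against the measure $dV_{\nu+\alpha}$. The first hypothesis $\nu+\alpha>\frac nr-1$ guarantees that $A^2_{\nu+\alpha}(\mathcal D)$ is nontrivial and that $K_{\nu+\alpha}$ is genuinely its reproducing kernel, while the hypothesis $\Delta^{\alpha}(\im\cdot)f\in L^p_\mu$ is the same as $f\in A^p_{\mu+\alpha p}(\mathcal D)$. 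Thus the content of the proposition is that the $(\nu+\alpha)$-kernel reproduces functions coming from a possibly different weighted Bergman space, and the role of the remaining two hypotheses is to force the representing integral to converge.

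\textbf{First,} I would establish the $L^{p'}_\mu$ membership at the base point. Writing $K_{\nu+\alpha}(z,i\be)=c_{\nu+\alpha}\Delta^{-(\nu+\alpha+\frac nr)}\bigl((y+e)-ix\bigr)$, where $z=x+iy$, $e$ is the identity element, and I used $(z-\overline{i\be})/i=(y+e)-ix$, the $p'$-th power of the norm equals, up to a constant,
\[
\int_{\Omega}\!\!\int_{\mathbb R^n}\Delta^{(\nu-\mu)p'+\mu-\frac nr}(y)\,\bigl|\Delta^{-(\nu+\alpha+\frac nr)p'}\bigl((y+e)-ix\bigr)\bigr|\,dx\,dy .
\]
I would evaluate the inner integral over $x\in\mathbb R^n$ by the standard one–variable estimate $\int_{\mathbb R^n}|\Delta^{-s}((y+e)-ix)|\,dx\simeq\Delta^{\frac nr-s}(y+e)$ of \cite{FK} (valid once $s$ exceeds the relevant threshold), and then the remaining integral over the cone by the Beta-type estimate $\int_\Omega\Delta^{a-\frac nr}(y)\,\Delta^{-b}(y+e)\,dy<\infty$, which converges exactly when $a>\frac nr-1$ and $b-a>\frac nr-1$. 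A direct computation with $p'=p/(p-1)$ shows that here $a=(\nu-\mu)p'+\mu$ and $b=(\nu+\alpha+\frac nr)p'-\frac nr$, so that $a>\frac nr-1$ is equivalent to $\nu p-\mu>(p-1)(\frac nr-1)$, that $b-a>\frac nr-1$ is equivalent to $\mu+\alpha p>(p-1)(\frac nr-1)-\frac nr$, and that $\nu+\alpha>\frac nr-1$ is what makes the inner $\mathbb R^n$-integral converge. This proves the first assertion of the proposition and pins down exactly why all three hypotheses appear.

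\textbf{Second,} for $f$ holomorphic with $\Delta^{\alpha}(\im\cdot)f\in L^p_\mu$, I would rewrite the right-hand side of (\ref{rep3}) as the pairing $\int_{\mathcal D}\bigl[\Delta^{\alpha}(\im w)f(w)\bigr]\bigl[\overline{K_{\nu+\alpha}(z,w)}\,\Delta^{\nu-\mu}(\im w)\bigr]\,dV_\mu(w)$ and apply H\"older's inequality. Since $G(\mathcal D)=\mathbb R^n\times H$ acts simply transitively on $\mathcal D$ and the Bergman kernel is covariant under this action, the integrability proved at $i\be$ in the first step transports to every fixed $z$; hence the integral converges absolutely for each $z$ and defines a holomorphic function of $z$. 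To identify this function with $f$, I would first invoke the defining reproducing property of $K_{\nu+\alpha}$ on $A^2_{\nu+\alpha}(\mathcal D)$ (the $p$-analogue for the natural weight being Corollary \ref{Repr2}), for which the identity is immediate, and then extend to all $f\in A^p_{\mu+\alpha p}(\mathcal D)$ by approximating $f$ by functions in a dense subspace of $A^2_{\nu+\alpha}\cap A^p_{\mu+\alpha p}$ and passing to the limit; the uniform pointwise estimate of Lemma \ref{lem:pointwiseestimberg} controls convergence on compact sets, while the kernel bound of the first step furnishes the dominating function needed to move the limit inside the integral.

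\textbf{The main obstacle} is the limiting argument of the second step rather than the (purely computational) integrability of the first: a generic $f\in A^p_{\mu+\alpha p}(\mathcal D)$ need not belong to $A^2_{\nu+\alpha}(\mathcal D)$, so one cannot directly quote the Hilbert-space reproducing property. One must therefore exhibit an explicit approximating sequence lying in the intersection of the two spaces for which both sides of (\ref{rep3}) converge simultaneously, and the interchange of limit and integral must be justified uniformly in $z$ on compact sets. This is exactly where the quantitative integrability from the first step and the pointwise bounds of Lemma \ref{lem:pointwiseestimberg} are indispensable; the secondary delicate point is the bookkeeping with the conjugate exponent $p'$, where a single threshold error would misstate the hypotheses.
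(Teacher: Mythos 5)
The paper offers no proof of this proposition: it is imported verbatim as Proposition 2.19 of \cite{BBGRS}, so there is no in-paper argument to compare against. Your two-step reconstruction is essentially the standard proof from that source, and it is correct: the $L^{p'}_\mu$ membership reduces, via Lemma \ref{int-compl}(1), to the cone Beta-integral $\int_\Omega\Delta^{a-\frac nr}(y)\Delta^{-b}(y+e)\,dy$, and your exponent bookkeeping checks out exactly --- with $a=(\nu-\mu)p'+\mu$ and $b=(\nu+\alpha+\frac nr)p'-\frac nr$ one verifies that $a>\frac nr-1$ is equivalent to $\nu p-\mu>(p-1)(\frac nr-1)$ and $b-a>\frac nr-1$ is equivalent to $\mu+\alpha p>(p-1)(\frac nr-1)-\frac nr$, while $\nu+\alpha>\frac nr-1$ handles the inner $\mathbb R^n$-integral since $(\nu+\alpha+\frac nr)p'\geq \nu+\alpha+\frac nr>\frac{2n}r-1$. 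Your second step is also the intended one: since $\Delta^\alpha(\im w)\,dV_\nu(w)=dV_{\nu+\alpha}(w)$, the identity is the $A^2_{\nu+\alpha}$ reproducing property extended by density of $A^2_{\nu+\alpha}\cap A^p_{\mu+\alpha p}$ in $A^p_{\mu+\alpha p}$, which is precisely the density fact the paper itself invokes to justify \eqref{parts}, with Hölder against the transported kernel norm and Lemma \ref{lem:pointwiseestimberg} controlling the left-hand side; note also that when $\mu+\alpha p\leq\frac nr-1$ the space in question is null and the formula is vacuous. Two small caveats: your integrability computation tacitly assumes $p'<\infty$, so the endpoint $p=1$ of the stated range requires a separate sup-estimate of $\Delta^{\nu-\mu}(\im z)|K_{\nu+\alpha}(z,i\be)|$ rather than an integral; and the conjugate on $K_{\nu+\alpha}(z,w)$ in your pairing is a slip (the formula carries $K_{\nu+\alpha}(z,w)$ itself), though it is harmless for the Hölder estimate since $|K_s(z,w)|=|K_s(w,z)|$.
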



 The next lemma gives integrability properties of  the
determinants and Bergman kernels.

\begin{lemma}\label{int-compl} Let $\alpha, \beta,\nu$ be real.
Then
\begin{itemize} \item[1)] for $y\in\Omega$, the integral
$$J_{\alpha}(y)=\int_{\mathbb{R}^{n}}\left|\Delta^{-\alpha}((x+iy)/i)\right|dx
$$ converges if and only if $\alpha > \frac{2n}{r} -1.$ In this case,
$J_{\alpha}(y)=C_{\alpha}\Delta^{-\alpha +\frac{n}{r}}(y)$,
where $C_{\alpha}$ is a constant depending only on
$\alpha$.
\item[2)]The function
$f(z)=\Delta^{-\alpha}(\frac{z+it}{i})$, with $t \in
\Omega$, belongs to $A_{\nu}^{p}$ if and only if
$$\nu>\frac{n}{r}-1\mand \alpha > \frac{1}{p}(\nu+\frac{2n}{r} -1).$$ In this
case,$$||f||_{A_{\nu}^{p}}=C_{\alpha,p}\Delta^{-\alpha
+(\nu+\frac {n}{r})\frac{1}{p}}(t).$$ \end{itemize}
\end{lemma}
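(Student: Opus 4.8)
The plan is to isolate two ``master'' integrals over the cone and reduce everything to them. Part 1) is a pure computation of a Fourier-type integral over $\mathbb R^n$, and part 2) then follows by Fubini: I apply part 1) to the inner integral in $x$ and a Beta-type integral over $\Omega$ to the remaining integral in $y$.

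For part 1), first I would rewrite the integrand using $(x+iy)/i = y - ix$, so that $J_\alpha(y) = \int_{\mathbb R^n}|\Delta(y-ix)|^{-\alpha}\,dx$. Since the group $H$ acts simply transitively on $\Omega$, write $y = h\cdot e$ with $h\in H\subset G(\Omega)$ and perform the linear change of variables $x = h\cdot u$. Using the homogeneity $\Delta(h\zeta) = (\det h)^{r/n}\Delta(\zeta) = \Delta(y)\Delta(\zeta)$ (a polynomial identity, hence valid for the complex argument $\zeta = e - iu$) together with $dx = (\det h)\,du = \Delta^{\frac nr}(y)\,du$, one obtains the exact self-similarity $J_\alpha(y) = \Delta^{-\alpha+\frac nr}(y)\,J_\alpha(e)$. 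It then remains to decide when $C_\alpha := J_\alpha(e) = \int_{\mathbb R^n}|\Delta(e-iu)|^{-\alpha}\,du$ is finite; diagonalizing $e - iu$ in a Jordan frame reduces this to a product of one-dimensional Beta integrals, whose convergence forces precisely $\alpha > \frac{2n}{r}-1$ (this is the computation carried out in \cite{FK}).

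For part 2), note first that $f$ is holomorphic on $\mathcal D$ since $(z+it)/i = -iz + t$ has real part $\im z + t \in \Omega$, and that $y + t \in \Omega$ because $\Omega$ is a convex cone. Writing the norm as an iterated integral, $\|f\|_{A^p_\nu}^p = \int_\Omega\Big(\int_{\mathbb R^n}|\Delta((y+t)-ix)|^{-\alpha p}\,dx\Big)\Delta^{\nu-\frac nr}(y)\,dy$, the inner integral is exactly $J_{\alpha p}(y+t)$; by part 1) it equals $C_{\alpha p}\,\Delta^{-\alpha p+\frac nr}(y+t)$, which is legitimate provided $\alpha p > \frac{2n}{r}-1$. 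Hence $\|f\|_{A^p_\nu}^p = C_{\alpha p}\int_\Omega \Delta^{-s}(y+t)\,\Delta^{\nu-\frac nr}(y)\,dy$ with $s = \alpha p - \frac nr$, and I would finish by invoking the standard cone Beta-integral $\int_\Omega\Delta^{-s}(y+t)\Delta^{\mu-\frac nr}(y)\,dy = C\,\Delta^{\mu-s}(t)$, valid exactly when $\mu>\frac nr-1$ and $s-\mu>\frac nr-1$. With $\mu=\nu$ these two conditions read $\nu>\frac nr-1$ and $\alpha p > \nu+\frac{2n}{r}-1$, i.e. $\alpha>\frac1p(\nu+\frac{2n}{r}-1)$ (which automatically dominates the inner condition $\alpha p>\frac{2n}{r}-1$ since $\nu>0$), and the value $C\,\Delta^{\nu-s}(t)=C\,\Delta^{\nu-\alpha p+\frac nr}(t)$ yields $\|f\|_{A^p_\nu}=C_{\alpha,p}\,\Delta^{-\alpha+(\nu+\frac nr)\frac1p}(t)$ after taking $p$-th roots.

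The main obstacle is not the assembly but the two master integral identities, and in particular the sharp convergence thresholds $\alpha>\frac{2n}{r}-1$ and $s-\mu>\frac nr-1$. These cannot be seen by naive size estimates of the polynomial $\Delta$; they require the Jordan-algebra spectral structure of $\Omega$ (equivalently, the explicit Gindikin Gamma and Beta functions of the cone), which is where all the analytic content sits. Once those two identities are granted, the homogeneity argument and Fubini make the rest mechanical.
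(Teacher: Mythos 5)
Your proposal is correct in substance, and it is worth noting that the paper itself gives no argument at all for this lemma --- the ``proof'' is the single line ``See \cite{BBGNPR}'' --- so you are supplying the computation that the authors outsource. Your two reductions are exactly the standard ones: the homogeneity identity $J_\alpha(y)=\Delta^{-\alpha+\frac nr}(y)J_\alpha(e)$ via $y=he$, $x=hu$, $\operatorname{Det}h=\Delta^{\frac nr}(y)$ is correct (and the polynomial identity $\Delta(h\zeta)=\Delta(y)\Delta(\zeta)$ does extend to complex $\zeta$ as you say), and the Tonelli reduction of part 2) to $J_{\alpha p}(y+t)$ followed by the cone Beta integral $\int_\Omega\Delta^{-s}(y+t)\Delta^{\nu-\frac nr}(y)\,dy=C\Delta^{\nu-s}(t)$ gives both the sharp conditions and the exact value of the norm; the bookkeeping $\alpha p>\nu+\frac{2n}{r}-1\Rightarrow\alpha p>\frac{2n}{r}-1$ is legitimate since $n\geq 3$ forces $r\geq 2$ and $\frac nr>1$, hence $\nu>\frac nr-1>0$. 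The only point I would correct is your gloss on how $J_\alpha(e)<\infty\iff\alpha>\frac{2n}{r}-1$ is established: diagonalizing $e-iu$ in a Jordan frame does \emph{not} factor the $du$-integral over $\mathbb R^n$ into $r$ one-dimensional Beta integrals (the map to eigenvalues is not a product map and carries a nontrivial Jacobian); the actual computation in \cite{FK} goes through the Gamma function $\Gamma_\Omega$ of the cone via the Laplace transform of $\Delta^{\alpha-\frac nr}$, or by induction on the rank through the Peirce decomposition. Since you explicitly defer that master integral to \cite{FK} --- which is no more than the paper itself does --- this is an imprecise attribution rather than a gap, but the phrase as written would not survive as a proof of the threshold.
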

\Proof See \cite{BBGNPR}.\ProofEnd

\medskip

\subsection{Integral operators and duality}
Let us now consider the following integral operators
$T=T_{\mu,\alpha}$ and $T^+=T^+_{\mu,\alpha}$, defined by
\begin{equation}\label{eq:T}
  Tf(z)=\Delta^{\alpha}(\im z)\int_{\mathcal D}K_{\mu+\alpha}(z,w)f(w)dV_\mu(w),
\end{equation}
 and
 \begin{equation}\label{eq:T+}
T^+f(z)=\Delta^{\alpha}(\im z)\int_{\mathcal
D}|K_{\mu+\al}(z,w)|f(w)dV_\mu(w),
\end{equation}
 provided these integrals make sense.
 Observe that $P_{\mu}=T_{\mu,0}$.

The following result is in \cite{sehba}.
 \begin{lemma}\label{lem:bergtype} Let $\alpha,\mu,\nu\in\mathbb R$ and $1\le
 p<\infty$.
 Then the following conditions are
 equivalent:
\begin{itemize}
 \item[(a)]
The operator $T^+_{\mu,\alpha}$ is  well defined and
bounded on $L_{\nu}^{p}(\mathcal D)$. \item[(b)] The
parameters satisfy $\mu+\alpha>\frac{n}{r}-1$ and the
inequalities
\[
\mu p-\nu>(\frac{n}{r}-1)\max \{1, p-1\}, \quad \alpha
p+\nu\,>(\frac{n}{r}-1)\max \{1, p-1\}.\]
 \end{itemize} \end{lemma}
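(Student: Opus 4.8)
The statement is an equivalence, so I would prove (b)$\Rightarrow$(a) and (a)$\Rightarrow$(b) separately, treating $T^+_{\mu,\alpha}$ throughout as a positive kernel operator. Writing $z=x+iy$, $w=u+iv$ and absorbing the weight of $L^p_\nu(\mathcal D)$ into the kernel, I would record
\[
T^+_{\mu,\alpha}f(z)=\int_{\mathcal D}\mathcal H(z,w)f(w)\,dV_\nu(w),\qquad
\mathcal H(z,w)=c\,\Delta^\alpha(y)\,\bigl|\Delta^{-(\mu+\alpha+\frac nr)}((z-\overline w)/i)\bigr|\,\Delta^{\mu-\nu}(v),
\]
so the problem becomes the $L^p(dV_\nu)$-boundedness of a positive integral operator. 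All the analysis rests on two scalar estimates: the transverse integral $\int_{\mathbb R^n}|\Delta^{-s}((x+iy)/i)|\,dx\simeq\Delta^{-s+\frac nr}(y)$ of Lemma \ref{int-compl}(1), valid for $s>\frac{2n}r-1$; and the Gindikin beta integral $\int_\Omega\Delta^{a-\frac nr}(v)\Delta^{-b}(y+v)\,dv\simeq\Delta^{a-b}(y)$ (see \cite{FK}), valid for $a>\frac nr-1$ and $b-a>\frac nr-1$. The condition $\mu+\alpha>\frac nr-1$ is exactly the threshold making the transverse integral of $\mathcal H$ converge, i.e. it is the requirement that $T^+_{\mu,\alpha}$ be well defined. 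For the endpoint $p=1$ the criterion is sharp and elementary: the $L^1(dV_\nu)$-norm of a positive kernel operator is $\mathrm{ess\,sup}_w\int_{\mathcal D}\mathcal H(z,w)\,dV_\nu(z)$, and evaluating this by integrating first in $x$ and then in $y$ returns a constant precisely when $\mu+\alpha>\frac nr-1$, $\alpha+\nu>\frac nr-1$ and $\mu-\nu>\frac nr-1$, which is (b) for $p=1$.

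The range $1<p<\infty$ is the heart of the matter, and for sufficiency I would run Schur's test. The naive choice $h(w)=\Delta^{-t}(v)$ reduces both Schur integrals to one transverse integral followed by one beta integral, and a common admissible exponent $t$ exists \emph{iff} $\alpha p+\nu>(\frac nr-1)p$ and $\mu p-\nu>(\frac nr-1)p$. This is strictly stronger than (b) and hence does \emph{not} reach the claimed range; the gain from $(\frac nr-1)p$ to $(\frac nr-1)\max\{1,p-1\}$ is the whole difficulty, and it cannot be produced by a power of $\Delta(v)$ alone, because such a weight ignores the $\re w$-direction of the kernel. I would instead take a kernel-adapted Schur function $h(w)=|\Delta^{-\sigma}((w+ie)/i)|$ and evaluate the Schur integrals as genuine convolutions of two determinant kernels, whose sharp convergence thresholds give exactly (b); equivalently, one reduces the boundedness of $T^+_{\mu,\alpha}$ to the sharp weighted Bergman projection estimate, i.e. to the Bekoll\'e--Bonami condition for the power weight $\Delta^{\nu-\frac nr}$, which for this weight unwinds to the two inequalities of (b). The appearance of $\max\{1,p-1\}$ reflects that the binding constraint comes from the $L^p$ side when $p\le 2$ and from the $L^{p'}$ side when $p\ge 2$, so the argument pivots at $p=2$.

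For necessity I would test $T^+_{\mu,\alpha}$ on the positive functions $f_c(w)=|\Delta^{-c}((w+ie)/i)|$, whose $L^p_\nu$-norms are given explicitly by Lemma \ref{int-compl}(2), compute $T^+_{\mu,\alpha}f_c$ by the same convolution machinery, and then let $c$ approach its critical value; finiteness of $\|T^+_{\mu,\alpha}f_c\|_{L^p_\nu}$ forces each inequality in (b) in turn, the two thresholds again separating according to $p\lessgtr 2$. A cleaner alternative is to use the $\delta$-lattice $\{z_j\}$ of Lemma \ref{lem:covering}: testing on normalized bumps supported on the balls $B_\delta(z_j)$ and invoking the mean value inequality (Lemma \ref{lem:meanvalue}) together with the sampling theorem (Lemma \ref{5.3}) reduces the necessity to the boundedness of an explicit positive matrix on the sequence spaces $l^p_\nu$, whose diagonal and duality conditions (Lemma \ref{5.1}) are precisely (b).

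The main obstacle is the sufficiency at the sharp endpoints: a pure power weight in Schur's test only recovers the smaller range with $p$ in place of $\max\{1,p-1\}$, so the sharp inequalities genuinely require either the kernel-adapted Schur function together with a sharp composition estimate for products of determinant kernels, or the reduction to the deep sharp boundedness of the weighted Bergman projection. Everything else — the two endpoint computations and the necessity — is routine bookkeeping with the transverse and beta integrals.
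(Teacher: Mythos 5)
First, a point of comparison: the paper does not prove this lemma at all --- it is quoted from \cite{sehba} --- so what follows measures your proposal against the argument of that reference (which goes back to B\'ekoll\'e--Bonami and \cite{BBGNPR}). Your reduction to a positive kernel against $dV_\nu$, your treatment of $p=1$ via $\sup_w\int\mathcal H(z,w)\,dV_\nu(z)$, and above all your diagnosis that Schur's test with $h(w)=\Delta^{-t}(\Im w)$ only reaches the smaller range $\mu p-\nu>(\frac nr-1)p$, $\alpha p+\nu>(\frac nr-1)p$, are all correct. The gap is in the proposed repair. The test function $h(w)=|\Delta^{-\sigma}((w+ie)/i)|$ cannot work: take $w=i\epsilon e$, $\epsilon\to0$, in the second Schur inequality $\int\mathcal H(z,w)h(z)^p\,dV_\nu(z)\le Ch(w)^p$. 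The right-hand side stays comparable to $1$, while the contribution of the region $\{|z-ie|\le\frac12\}$ to the left-hand side is bounded below by a constant times $\Delta^{\mu-\nu}(\epsilon e)=\epsilon^{r(\mu-\nu)}$, which blows up whenever $\mu<\nu$ --- and $\mu<\nu$ is perfectly compatible with (b): for large $p$ the condition $\mu p-\nu>(\frac nr-1)(p-1)$ only forces $\mu$ slightly above $\frac nr-1$. The same loss of invariance under the automorphism group of the cone defeats any variant built from $\Delta((\cdot+ie)/i)$. Your fallback, ``reduce to the Bekoll\'e--Bonami condition for the power weight,'' is circular here: no Bekoll\'e--Bonami--type characterization of the weights for which $P^+$ is bounded is known for cones of rank $r\ge2$, and the sharp power-weight statement \emph{is} the special case $\alpha=0$, $\mu=\nu$ of the lemma being proved.

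The missing idea is the use of the \emph{generalized power functions} of the cone. Writing $\frac nr=1+(r-1)\frac d2$ and $\Delta_{\mathbf{s}}=\Delta_1^{s_1-s_2}\cdots\Delta_{r-1}^{s_{r-1}-s_r}\Delta_r^{s_r}$ (the $\Delta_j$ being the principal minors, so $\Delta_{(s,\dots,s)}=\Delta^s$), one runs Schur's test with $h(w)^{p'}=\Delta_{-\mathbf{m}}(\Im w)$ for a vector $\mathbf{m}\in\mathbb R^r$. The beta integral $\int_\Omega\Delta_{\mathbf{s}-\frac nr\mathbf 1}(v)\Delta^{-b}(y+v)\,dv$ converges under the componentwise conditions $s_j>(j-1)\frac d2$ and $b-s_j>(r-j)\frac d2$, which are strictly weaker than the scalar thresholds $s>\frac nr-1$, $b-s>\frac nr-1$ you quote. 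Carrying these through the two Schur integrals gives, for each $j$, the constraints $(p-1)(r-j)\frac d2+(j-1)\frac d2<\alpha p+\nu$ and $(r-j)\frac d2+(p-1)(j-1)\frac d2<\mu p-\nu$; maximizing the left-hand sides over $j=1,\dots,r$ (the maximum sits at $j=1$ or $j=r$ according as $p\lessgtr2$, which is your observed pivot at $p=2$) yields exactly the sharp bound $(\frac nr-1)\max\{1,p-1\}$. A companion family of generalized-power test functions, concentrated near the various boundary strata of $\Omega$, is likewise what produces the sharp necessity; scalar powers of $\Delta$, and functions of the full determinant alone, cannot detect the rank-$r$ stratification of $\partial\Omega$ that is responsible for the factor $\max\{1,p-1\}$.
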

 Also, from \cite{sehba} we have the following.
 \begin{lemma}\label{lem:bergtypeinfty} Let $\alpha,\mu\in\mathbb R$. Then both $T$ and $T^+$ are bounded on $L^\infty (\mathcal D)$
 if and only $\alpha>\frac{n}{r}-1$ and $\mu>\frac{n}{r}-1$.
 \end{lemma}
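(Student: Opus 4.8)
The plan is to reduce both implications to a single explicit computation, namely the evaluation of $T^+$ applied to the constant function $\mathbf 1\in L^\infty(\mathcal D)$. The point is that for any $f$ with $\|f\|_\infty\le 1$ one has the pointwise domination $|Tf(z)|\le T^+|f|(z)\le T^+\mathbf 1(z)$, so that both $T$ and $T^+$ are bounded on $L^\infty$ as soon as $T^+\mathbf 1$ is a bounded function; conversely, if $T^+$ is bounded (which is part of the hypothesis in the necessity direction, since we assume \emph{both} operators bounded) then $T^+\mathbf 1$ must be finite, and because the defining integrand is nonnegative, finiteness at even a single point already forces the integral to converge. Thus the whole lemma will follow once I identify exactly when the integral defining $T^+\mathbf 1$ converges and show that, when it does, it is constant.

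Writing $z=x+iy$ and $w=u+iv$, I would first carry out the integration in the horizontal variable $u\in\mathbb R^n$. Since
\[
K_{\mu+\alpha}(z,w)=c_{\mu+\alpha}\,\Delta^{-(\mu+\alpha+\frac{n}{r})}\bigl(((x-u)+i(y+v))/i\bigr),
\]
Lemma~\ref{int-compl}(1) applies after the translation $x-u\mapsto x'$ and gives
\[
\int_{\mathbb R^n}\bigl|\Delta^{-(\mu+\alpha+\frac{n}{r})}\bigl((x'+i(y+v))/i\bigr)\bigr|\,dx'
=J_{\mu+\alpha+\frac{n}{r}}(y+v),
\]
which is finite precisely when $\mu+\alpha+\frac{n}{r}>\frac{2n}{r}-1$, i.e. $\mu+\alpha>\frac{n}{r}-1$, and in that case equals $C\,\Delta^{-(\mu+\alpha)}(y+v)$. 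Substituting back, I am left with
\[
T^+\mathbf 1(z)=C\,\Delta^{\alpha}(y)\int_\Omega \Delta^{-(\mu+\alpha)}(y+v)\,\Delta^{\mu-\frac{n}{r}}(v)\,dv,
\]
a Beta-type integral over the cone $\Omega$.

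The heart of the matter is the convergence analysis of this last integral, which is exactly the part requiring the structure theory of the symmetric cone. By the standard evaluation of such Beta integrals on $\Omega$ (see \cite{FK}), the integral converges if and only if $\mu>\frac{n}{r}-1$ (integrability at the vertex of the cone, governed by the exponent of the factor $\Delta^{\mu-\frac{n}{r}}(v)$) and $\alpha>\frac{n}{r}-1$ (integrability at infinity, governed by the asymptotics $\Delta^{-(\mu+\alpha)}(y+v)\,\Delta^{\mu-\frac{n}{r}}(v)\sim\Delta^{-\alpha-\frac{n}{r}}(v)$ for large $v$), in which case its value is $C\,\Delta^{-\alpha}(y)$. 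Consequently $T^+\mathbf 1(z)=C\,\Delta^{\alpha}(y)\,\Delta^{-\alpha}(y)=C$, a constant independent of $z$. This simultaneously yields sufficiency, since $\|T^+f\|_\infty\le C\|f\|_\infty$ and $\|Tf\|_\infty\le\|T^+|f|\,\|_\infty\le C\|f\|_\infty$, and necessity, since finiteness of $T^+\mathbf 1$ at a point forces both $\mu>\frac{n}{r}-1$ and $\alpha>\frac{n}{r}-1$ through the two convergence conditions above. I expect the main obstacle to be making the two convergence thresholds of the cone integral precise — disentangling the vertex condition ($\mu>\frac{n}{r}-1$) from the condition at infinity ($\alpha>\frac{n}{r}-1$) — for which the homogeneity of $\Delta$ under the group $H$ and the explicit Beta integral of Faraut--Koranyi are the essential tools.
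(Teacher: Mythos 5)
Your proposal is correct: testing $T^+$ on the constant function $\mathbf 1$, integrating out the horizontal variable via Lemma~\ref{int-compl}(1) (which needs $\mu+\alpha>\frac{n}{r}-1$), and then invoking the Faraut--Kor\'anyi Beta integral over $\Omega$ (convergent iff $\mu>\frac{n}{r}-1$ and $\alpha>\frac{n}{r}-1$, with value $C\Delta^{-\alpha}(y)$) yields $T^+\mathbf 1\equiv C$, which gives both directions at once since the kernel of $T^+$ is nonnegative. The paper itself gives no proof, deferring to \cite{sehba}, and your computation is essentially the standard argument used there, so there is nothing to fix.
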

We will also need the following result.
\begin{lemma}[B\'ekoll\'e 1986 \cite{Be}] \label{A1}Let
$\mu > \frac{n}{r}-1$ and $m$ an integer such that $m>\frac
nr -1$. Then the dual space
 $(A_{\mu}^{1})^*$ identifies with the Bloch space $\mathbb B^\infty$ under the integral pairing
\Be\langle f,g\rangle_{\mu,m}=\int_{\mathcal
D}f(z)\overline {\Delta^m\Box^m g(z)}dV_\mu(z),\quad f\in
A_{\mu}^{1},\;\; g\in\mathbb B^\infty\label{b1}.\Ee
\end{lemma}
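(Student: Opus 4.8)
The plan is to prove the two halves separately: that every $g\in\mathbb{B}^\infty$ induces a bounded functional on $A_\mu^1$ through the pairing, and conversely that every bounded functional arises this way from a $g\in\mathbb{B}^\infty$ whose class modulo $\mathcal N_m$ is uniquely determined, with comparable norms. By the isomorphism of Besov spaces recalled above, the integer $m>\frac nr-1$ may be fixed arbitrarily. The easy direction is a direct estimate: for $g\in\mathbb{B}^\infty$ one has $|\Box^m g(z)|\le\Delta^{-m}(\im z)\,\|\Delta^m\Box^m g\|_{L^\infty}$, whence $|\langle f,g\rangle_{\mu,m}|\le\|g\|_{\mathbb{B}^\infty}\int_{\mathcal D}|f(z)|\,dV_\mu(z)=\|g\|_{\mathbb{B}^\infty}\,\|f\|_{A_\mu^1}$, so $g\mapsto\langle\cdot,g\rangle_{\mu,m}$ maps $\mathbb{B}^\infty$ into $(A_\mu^1)^*$ with norm at most one.

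For the converse, let $L\in(A_\mu^1)^*$. Since $A_\mu^1$ is a closed subspace of $L_\mu^1(\mathcal D)$, Hahn--Banach extends $L$ to $L_\mu^1(\mathcal D)$ without increasing its norm, and $(L_\mu^1)^*=L^\infty$ yields $\psi\in L^\infty(\mathcal D)$ with $\|\psi\|_{L^\infty}=\|L\|$ and $L(f)=\int_{\mathcal D}f\,\overline\psi\,dV_\mu$ for all $f\in A_\mu^1$. I then set $G(z):=C_{\mu,m}\int_{\mathcal D}K_{\mu+m}(z,w)\psi(w)\,dV_\mu(w)$, which converges absolutely and is holomorphic, since $m,\mu>\frac nr-1$ guarantee through Lemma \ref{lem:bergtypeinfty} that $T^+_{\mu,m}$ acts on $L^\infty$ and hence $\int_{\mathcal D}|K_{\mu+m}(z,w)|\,dV_\mu(w)\le C\Delta^{-m}(\im z)$. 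By \eqref{boxofkernel} this $G$ is $\Box^m$ of the formal integral $P_\mu\psi$, and $\Delta^m(\im\cdot)\,G=C_{\mu,m}T_{\mu,m}\psi\in L^\infty$ with $\|\Delta^m G\|_{L^\infty}\lesssim\|\psi\|_{L^\infty}=\|L\|$. Let $g$ be a holomorphic solution of $\Box^m g=G$, whose class modulo $\mathcal N_m$ is well defined within the Besov framework; then $g\in\mathbb{B}^\infty$ with $\|g\|_{\mathbb{B}^\infty}\lesssim\|L\|$.

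It remains to verify $\langle f,g\rangle_{\mu,m}=L(f)$ for all $f\in A_\mu^1$. Writing out the pairing, replacing $\Box^m g$ by $G$, and using $\overline{K_{\mu+m}(z,w)}=K_{\mu+m}(w,z)$, one interchanges the two integrations; this is legitimate because $|\psi|\le\|\psi\|_{L^\infty}$ and the bound $\Delta^m(\im z)\int_{\mathcal D}|K_{\mu+m}(z,w)|\,dV_\mu(w)\le C$ (again Lemma \ref{lem:bergtypeinfty}) dominate the double integral by $C\|\psi\|_{L^\infty}\|f\|_{A_\mu^1}$. After Fubini the inner integral $\int_{\mathcal D}f(z)\,\Delta^m(\im z)\,K_{\mu+m}(w,z)\,dV_\mu(z)$ collapses to $f(w)$ by the reproducing formula \eqref{rep3} of Proposition \ref{Repr3}, applied with weight $\mu$, shift $m$, and exponent $p=1$: its three hypotheses reduce to $\mu,m>\frac nr-1$, and the integrability requirement $\Delta^m f\in L^1_{\mu-m}$ is exactly $f\in A_\mu^1$. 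Thus $\langle f,g\rangle_{\mu,m}=C_{\mu,m}L(f)$, and absorbing $C_{\mu,m}$ into the normalization of $g$ gives equality. For injectivity, if $\langle f,g\rangle_{\mu,m}=0$ for all $f\in A_\mu^1$, testing against $f=K_{\mu+t}(\cdot,w)$ (which lies in $A_\mu^1$ for $t>\frac nr-1$ by Lemma \ref{int-compl}) and reproducing the holomorphic function $\Box^m g$ forces $\Box^m g\equiv0$, i.e. $g=0$ in $\mathbb{B}^\infty=\mathcal H_m/\mathcal N_m$; together with the two norm bounds this gives the isomorphism with $\|g\|_{\mathbb{B}^\infty}\approx\|L\|$. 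I expect the main obstacle to be the rigorous execution of this last step: the Fubini interchange across $\Box^m$ and the precise parameter matching in \eqref{rep3}, together with the existence of a Bloch primitive $g$ of $G$, which must be read off from the solvability of $\Box^m g=G$ underlying the definition of the Besov spaces.
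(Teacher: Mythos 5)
The paper gives no proof of Lemma \ref{A1}: it is imported wholesale from B\'ekoll\'e's 1986 paper \cite{Be}, so there is no internal argument to compare yours against. Your blind proof is, in substance, the classical route by which this duality is established: the forward estimate $|\langle f,g\rangle_{\mu,m}|\le\|g\|_{\mathbb B^\infty}\|f\|_{1,\mu}$ is immediate; for the converse, Hahn--Banach plus $(L^1_\mu)^*=L^\infty$ produces $\psi$, and your parameter bookkeeping is correct throughout. In particular, Lemma \ref{lem:bergtypeinfty} does give $\int_{\mathcal D}|K_{\mu+m}(z,w)|\,dV_\mu(w)\le C\Delta^{-m}(\im z)$ precisely under $\mu,m>\frac nr-1$, the Fubini domination is as you say, and your instantiation of \eqref{rep3} with $p=1$, shift $\alpha=m$ and weight $\mu-m$ (so that $\Delta^m f\in L^1_{\mu-m}$ is exactly $f\in A^1_\mu$) satisfies all three hypotheses of Proposition \ref{Repr3}. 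Deferring the solvability of $\Box^m g=G$ to the framework is also legitimate, since the paper presupposes it in the definition \eqref{eq:extendedBergproj} of $P_\nu^{(m)}$.

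The one soft spot is the injectivity step, where you ``reproduce the holomorphic function $\Box^m g$'': the identity $\Box^m g(w)=c\int_{\mathcal D}K_{\mu+m}(w,z)\Delta^m(\im z)\Box^m g(z)\,dV_\mu(z)$ for a Bloch class cannot be extracted from Proposition \ref{Repr3}, which requires $p<\infty$, and a function bounded by $\Delta^{-m}(\im z)$ uniformly in $x$ lies in no weighted $L^p_\sigma$; likewise \eqref{eq:extendedBergproj1} is stated in the paper only for $1\le p<\infty$. The needed $p=\infty$ analogue is true and available in \cite{BBGRS}, but it must be invoked explicitly (and you should take $t=m$ in your test functions so the kernel matches that identity). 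With that reference supplied, your proof is complete and coincides with the standard argument behind the cited result.
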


The following duality with change of weights can be proved in the same way as the
previous one.
\begin{lemma}\label{lem:ApBp0}
Let $\nu,\mu>\frac{n}{r}-1$ and $1<p\leq2$. Then,
$(A_{\nu}^{p})^{*}$ identifies with $\mathbb B_{\nu+(\mu-\nu)p'}^{p'}$
under the integral pairing \Be \langle
f,g\rangle_{\nu,m}=\int_{\mathcal D}f(z)\overline
{\Delta^{m}\Box^{m}g(z)}
\,dV_\mu(z),\quad f\in A_{\nu}^{p},\quad g\in \mathbb
B_{\nu+(\mu-\nu)p'}^{p'},\label{b3}\Ee for any integer $m\geq
k_0(p',\nu+(\mu-\nu)p')$.
\end{lemma}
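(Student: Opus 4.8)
\emph{Proof plan.} Set $\beta:=\nu+(\mu-\nu)p'$. As the statement suggests, the plan is to follow the proof of Lemma \ref{A1} verbatim, replacing the Bloch endpoint by the genuine Besov space $\mathbb B_\beta^{p'}$ and tracking the change of weight from $\mu$ to $\nu$; concretely I would prove the two containments $\mathbb B_\beta^{p'}\hookrightarrow(A_\nu^p)^*$ and $(A_\nu^p)^*\hookrightarrow\mathbb B_\beta^{p'}$ separately, both under the pairing (\ref{b3}).

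For the first containment I would argue directly by Hölder, with no integration by parts needed. Writing the pairing as $\langle f,g\rangle_{\nu,m}=\int_{\mathcal{D}}f\,\overline{\Delta^m\Box^m g}\,\Delta^{\mu-\frac{n}{r}}(\im z)\,dx\,dy$ and splitting the weight as $\Delta^{\mu-\frac{n}{r}}=\Delta^{\frac1p(\nu-\frac{n}{r})}\cdot\Delta^{\mu-\frac{n}{r}-\frac1p(\nu-\frac{n}{r})}$, Hölder with exponents $p,p'$ produces the factors $\|f\|_{A_\nu^p}$ and $\left(\int|\Delta^m\Box^m g|^{p'}\Delta^{(\mu-\frac{n}{r}-\frac1p(\nu-\frac{n}{r}))p'}\right)^{1/p'}$. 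Using $p'/p=p'-1$, the remaining exponent collapses to exactly $\beta-\frac{n}{r}$, so the second factor is $\|\Delta^m\Box^m g\|_{L_\beta^{p'}}\simeq\|g\|_{\mathbb B_\beta^{p'}}$; hence $|\langle f,g\rangle_{\nu,m}|\le\|f\|_{A_\nu^p}\|g\|_{\mathbb B_\beta^{p'}}$. The value is well defined on classes (since $\Box^m g$ is) and independent of $m\ge k_0(p',\beta)$ by the formula (\ref{parts}).

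For the converse containment I would start from $\Lambda\in(A_\nu^p)^*$, extend it by Hahn--Banach to $L_\nu^p$ and represent it as $\Lambda(f)=\int_{\mathcal{D}}f\overline{\Phi}\,dV_\nu$ with $\Phi\in L_\nu^{p'}$, $\|\Phi\|_{L_\nu^{p'}}=\|\Lambda\|$. I then define the candidate $g$ by prescribing, for $m$ large, $\Box^m g=c\int_{\mathcal{D}}K_{\mu+m}(\cdot,w)\Phi(w)\,dV_\nu(w)$ (equivalently $g=c\int K_\mu(\cdot,w)\Phi(w)\,dV_\nu(w)$, using (\ref{boxofkernel})). The crux is the bound $\|g\|_{\mathbb B_\beta^{p'}}\lesssim\|\Phi\|_{L_\nu^{p'}}$: converting $dV_\nu=\Delta^{\nu-\mu}\,dV_\mu$ gives $|\Delta^m\Box^m g|\le c\,T^+_{\mu,m}h$ with $h=|\Phi|\Delta^{\nu-\mu}(\im\cdot)$, and since $(\nu-\mu)p'+\beta=\nu$ one has $\|h\|_{L_\beta^{p'}}=\|\Phi\|_{L_\nu^{p'}}$, so the matter reduces to the boundedness of $T^+_{\mu,m}$ on $L_\beta^{p'}$. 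By Lemma \ref{lem:bergtype}, since $p'\ge2$ gives $\max\{1,p'-1\}=p'-1$, the three required inequalities are $\mu+m>\frac{n}{r}-1$ and $mp'+\beta>(\frac{n}{r}-1)(p'-1)$, both valid for $m$ large, together with $\mu p'-\beta>(\frac{n}{r}-1)(p'-1)$; a short computation gives $\mu p'-\beta=\nu(p'-1)$, so this last inequality is \emph{exactly} the hypothesis $\nu>\frac{n}{r}-1$. This is the one place where $p\le2$ is essential, and it is the step I expect to be the main obstacle. The same computation shows $\Box^m g\in A_{\beta+mp'}^{p'}$, so $g\in\mathbb B_\beta^{p'}$.

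Finally I would verify $\langle f,g\rangle_{\nu,m}=\Lambda(f)$. For $f$ in the dense subspace $A_\nu^p\cap A_\nu^2$ I substitute $\Delta^m\Box^m g$, use $\overline{K_{\mu+m}(z,w)}=K_{\mu+m}(w,z)$, apply Fubini (legitimate by the same $T^+_{\mu,m}$ bound and Hölder), and evaluate the inner integral $\int_{\mathcal{D}}f(z)K_{\mu+m}(w,z)\Delta^m(\im z)\,dV_\mu(z)=\int_{\mathcal{D}}f(z)K_{\mu+m}(w,z)\,dV_{\mu+m}(z)=f(w)$ by Proposition \ref{Repr3} (with $\mu,m,\nu-mp$ in place of its $\nu,\alpha,\mu$, whose hypotheses hold for $m$ large because $\nu>\frac{n}{r}-1$). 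After fixing the constant $c$, this gives $\langle f,g\rangle_{\nu,m}=\Lambda(f)$ on the dense subspace, hence everywhere. Passing from this large $m$ to any $m\ge k_0(p',\beta)$ is done through the Besov-space isomorphism of Subsection 3.2, and injectivity of $g\mapsto\langle\,\cdot\,,g\rangle_{\nu,m}$ follows from nondegeneracy of the pairing (if $\Delta^m\Box^m g=0$ then $g=0$ in $\mathbb B_\beta^{p'}$). Apart from the $T^+$-boundedness step, every estimate is the weight bookkeeping already carried out for Lemma \ref{A1}.
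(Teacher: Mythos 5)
Your proposal is correct and follows exactly the route the paper indicates: the paper gives no proof of this lemma, saying only that it ``can be proved in the same way as the previous one'' (Lemma \ref{A1}), and your Hahn--Banach/$L^{p'}$-representation argument combined with the $T^{+}_{\mu,m}$ boundedness check from Lemma \ref{lem:bergtype} is precisely that adaptation with the weight change tracked. The crux you identify is the right one: the computation $\mu p'-\beta=\nu(p'-1)$ reduces the relevant condition of Lemma \ref{lem:bergtype} to exactly $\nu>\frac{n}{r}-1$, and $\max\{1,p'-1\}=p'-1$ is where $1<p\le 2$ enters.
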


We recall the following notations:
$$\tilde q_{\nu,p}=\frac{\nu + \frac{n}{r}-1}{(\frac{n}{rp'}-1)_+},\,\,\,\,\,\,\,\,\,
q_{\nu,p}=\min\{p,p'\}q_\nu,\,\,\,\,\,\textrm{and}\,\,\,\,\,\,
q_\nu =1+ \frac{\nu}{\frac{n}{r}-1}
$$ with $\tilde q_{\nu,p}=\infty$, if $n/r\le p'$. It is clear that $1<q_\nu<q_{\nu,p}<\tilde
q_{\nu,p}$. By density of the intersection $A_{\nu}^{p}\cap
A_{\mu}^2$ in $A_{\nu}^{p}$, we have the following reproducing
formula for all $\a>\frac{n}{r}-1$ and $f\in A_{\nu}^{p}$ with
 $1\le p<\tilde q_{\nu,p}$. \Be\label{rep}
f(z)=\int_{\mathcal D}K_\a(z,w)f(w)\Da^{\a-\frac{n}{r}}(\Im
w)dV(w),\,\,\, z\in \mathcal D.\Ee  The following theorem obtained in \cite{sehba} characterizes
the topological dual space of the Bergman space $A_{\nu}^{p}$
for some values of $p$ and $\nu$ for which the Bergman
projection is not necessarily bounded.
\btheo\label{theo:dualitychange} Let
$\nu>\frac{n}{r}-1$ be real, $1<p<q_\nu$. If
$\mu$ is a sufficiently large real number so that
$\mu>\frac{n}{r}-1$ and $1 < p' <q_{\mu}$, then the topological
dual space $(A_{\nu}^{p})^*$ of the Bergman space
$A_{\nu}^{p}$ identifies with $A_{\mu}^{p'}$ under the
integral pairing
$$\langle f,g\rangle_\a=\int_{\mathcal D}f(w)\overline
{g(w)}\Da^{\a -\frac{n}{r}}(\Im w)dV(w) $$ where
$\a=\frac{\nu}{p}+\frac{\mu}{p'}$, $\frac{1}{p} +\frac{1}{p'}=1$.
\etheo
\subsection{Atomic decomposition}
The following general atomic decomposition of functions in weighted Bergman space is from \cite{NaSeh}.
\btheo\label{theo:atomdecompo} Let $1 < p <\infty$ and let $\mu,\,\,\nu >\frac{n}{r}-1$ satisfying $\nu +
(\mu - \nu)p' > \frac{n}{r}-1.$ Assume that
the operator $P_{\mu}$ is bounded on $L_{\nu}^{p}(\mathcal D)$ and let $\{z_{j}\}_{j\in \N}$
be a $\da$-lattice in $\mathcal D$. Then the following assertions hold.
\begin{itemize}
\item[(i)] For every complex sequence $\{\la_{j}\}_{j\in \N}$ in
$l_{\nu}^{p}$, the series  $\sum_{j} {\lambda_{j}K_{\mu}(z,
z_{j})\Delta^{\mu + \frac{n}{r}}(y_{j})}$ is convergent in
$A_{\nu}^{p}(\mathcal D)$. Moreover, its sum $f$ satisfies the inequality
$$||f||_{p,\nu}\leq C_{\delta}||\{\la_{j}\}||_{l_{\nu}^{p}},$$ where
$C_{\delta}$ is a positive constant.
\item[(ii)] For $\da$ small
enough, every function $f\in A_{\nu}^{p}(\mathcal D)$ may be written as
$$ f(z) = \sum_{j} {\lambda_{j}K_{\mu}(z,
z_{j})\Delta^{\mu + \frac{n}{r}}(y_{j})}$$ with
\begin{equation}\label{eq:reverseineqatomdecomp}
||\{\la_{j}\}||_{l_{\nu}^{p}}\le
C_{\da}||f||_{p,\nu}\Ee where $C_{\da}$ is a positive
constant.
\end{itemize}
\etheo
\subsection{Properties of $ \mathbb {B}^p_\nu(\mathcal D)$, image of the Bergman operator.}
The notion of the Bergman projection has been extended in \cite{BBGRS} allowing to see $\mathbb {B}^p_\nu(\mathcal D)$ as a projection of $L^p_\nu(\mathcal D)$
in some sense. More precisely, for $1\le p<\infty$ and $\nu\in \mathbb R$ and $m$
large enough, the Bergman projection $P_\nu^{(m)}(f)$ of $f\in L_\nu^p(\mathcal D)$ is defined as the equivalent
class (in $\mathcal {H}_m$) of all holomorphic solutions of
\begin{equation}\label{eq:extendedBergproj}\Box ^mF=C_{\nu,m}\int_{\mathcal D}K_{\nu+m}(\cdot,w)f(w)dV_\nu(w).\end{equation}
The constant $C_{\nu,m}$ is as in (\ref{boxofkernel}). One easily observes that for any $F$ in
the equivalent class $P_\nu^{(m)}(f)$, $$\Delta^m\Box ^mF=C_{\nu,m}T_{\nu,m}f$$ and consequently, $P_\nu^{(m)}$ is well defined
and bounded from $L_\nu^p(\mathcal D)$ to $\mathbb {B}^p_\nu(\mathcal D)$ if and only if $T_{\nu,m}$ is bounded in $L_\nu^p(\mathcal D)$.
It follows in particular with the help of Proposition \ref{Repr1} that every $F\in \mathbb {B}^{p,(m)}_\nu(\mathcal D)$
satisfies
\begin{equation}\label{eq:extendedBergproj1}\Box ^mF=C_{\nu,m}\int_{\mathcal D}K_{\nu+m}(\cdot,w)\Delta^m(\im w)\Box ^mF(w)dV_\nu(w)\end{equation}
whenever $m$ is sufficiently large.

\subsection{Kor\'anyi's Lemma and averaging of a measure}
Let $\mu$ be a positive measure on $\mathcal D$. For $z\in \mathcal D$ and $\delta\in (0,1)$, we define the average of the positive measure $\mu$ at $z$ by $$\hat {\mu}_\delta(z)=\frac{\mu(B_\delta(z))}{V_\nu(B_\delta(z))}.$$
The following is also needed here.
\blem\label{lem:integraldiscretizationAverBer}
Let $1\le p\le \infty$, $\nu\in \mathbb R$, $\beta, \delta\in (0,1)$. For $z\in \mathcal D$ we define the average of the positive measure $\mu$ at $z$ by $$\hat {\mu}_\delta(z)=\frac{\mu(B_\delta(z))}{V_\nu(B_\delta(z))}.$$ Let $\{z_j\}_{j\in \mathbb N}$ be a $\delta$-lattice in $\mathcal D$. Then the following assertions are equivalent.
\begin{itemize}
\item[(i)] $\hat {\mu}_\beta\in L_\nu^p(\mathcal D)$.
\item[(ii)] $\{\hat {\mu}_\delta(z_j)\}_{j\in \mathbb N}\in l_\nu^p$.
\end{itemize}
\elem
\vskip .2cm
For $\nu>\frac{n}{r}-1$ and $w\in \mathcal D$,  the normalized reproducing kernel is
\begin{equation}\label{eq:normalrepkern} k_\nu(\cdot,w)=\frac{K_\nu(\cdot,w)}{\|K_\nu(\cdot,w)\|_{2,\nu}}=\Da^{-\nu-\frac nr}\left(\frac{\cdot-\bar w}{i}\right)\Da^{\frac{1}{2}(\nu+\frac nr)}(\Im w).
\end{equation}
We have the following result known as Kor\'anyi's lemma.
\blem\label{lem:Koranyi}\cite[Theorem 1.1]{BIN}
For every $\delta>0,$ there is a constant $C_{\delta}>0$ such that
$$\left|\frac{K(\zeta,z)}{K(\zeta,w)}-1\right|\leq C_{\delta}d(z,w)$$
for all $\zeta,z,w\in \mathcal D,$ with $d(z,w)\leq \delta.$
\elem
The following corollary is a straightforward consequence of \cite[Corollary 3.4]{BBGNPR} and Lemma \ref{lem:Koranyi}.
\bcor\label{kor}
Let $\nu>\frac nr-1,\,\,\da>0$ and $z,w\in \mathcal D.$  
There is a positive constant $C_\da$ such that for all $z\in B_\da(w),$
$$V_\nu(B_\da(w))|k_\nu(z,w)|^2\leq C_\da.$$

If $\da$ is sufficiently small, then there is $C>0$ such that for all $z\in B_\da(w),$
$$V_\nu(B_\da(w))|k_\nu(z,w)|^2\geq (1-C\da).$$
\ecor
\subsection{Carleson embeddings}
We start by recalling the following from \cite[Theorem 3.5]{NaSeh}.
\btheo\label{theo:Carlembed1}
Suppose that $\mu$ is a positive measure on $\mathcal D$, $1\le p<\infty$, $\nu>\frac{n}{r}-1$ and $\lambda\ge 1$. Then there
exists a constant $C>0$ such that
\begin{equation}\label{eq:Carlembed11}
\int_{\mathcal D}|f(z)|^{p\lambda}d\mu (z)\le C\|f\|_{p,\nu}^{p\lambda},\,\,\,f\in A_\nu^p(\mathcal D)
\end{equation}
if and only if
\begin{equation}\label{eq:Carlembed12}
\mu(B_\delta(z))\le C'\Delta^{\lambda(\nu+\frac{n}{r})}(\im z),\,\,\,z\in \mathcal D,
\end{equation}
for some $C'>0$ independent of $z$, and for some $\delta\in (0,1)$.
\etheo
\vskip .2cm
Note that condition (\ref{eq:Carlembed12}) is still sufficient for (\ref{eq:Carlembed11}) to holds in the case $0<p\leq 1$. The following embedding with loss was also obtained in \cite[Theorem 3.8]{NaSeh}.
\btheo\label{theo:Carlembed2}
Suppose that $\mu$ is a positive measure on $\mathcal D$,  $\nu>\frac{n}{r}-1$ and $0<\lambda<1$. Then the following assertions
hold.
\begin{itemize}
\item[(i)] Let $0< p <\infty.$ If the function
$$\mathcal {D}\ni z\mapsto \frac{\mu(B_\delta(z))}{\Delta^{\nu+\frac{n}{r}}(\im z)}$$
belongs to $L_\nu^{s}(\mathcal D)$, $s=\frac{1}{1-\lambda}$ for some $\delta\in (0,1)$, then
there
exists a constant $C>0$ such that
\begin{equation}\label{eq:Carlembed21}
\int_{\mathcal D}|f(z)|^{p\lambda}d\mu (z)\le C\|f\|_{p,\nu}^{p\lambda},\,\,\,f\in A_\nu^p(\mathcal D).
\end{equation}
\item[(ii)] Let $1\le p <\infty.$ If the Bergman projection $P_\alpha$ is bounded on $L_\nu^p(\mathcal D)$ and (\ref{eq:Carlembed21}) holds, then
the function
$$\mathcal {D}\ni z\mapsto \frac{\mu(B_\delta(z))}{\Delta^{\nu+\frac{n}{r}}(\im z)}$$
belongs to $L_\nu^{s}(\mathcal D)$, $s=\frac{1}{1-\lambda}$ for some $\delta\in (0,1)$.
\end{itemize}
\etheo

Any measure satisfying (\ref{eq:Carlembed11}) or (\ref{eq:Carlembed21}) is called $(\lambda, \nu)$-Carleson measure.

\section{Toeplitz operators from $A_\alpha^p(\mathcal D)$ to $\mathbb {B}_\beta ^q(\mathcal D).$}
In this section we provide criteria for the boundedness of the Toeplitz operators from a weighted Bergman space to an analytic Besov space. In particular, we prove here Theorem \ref{theo:Toepbound1} and Theorem \ref{theo:Toepbound2}.

\begin{proof}[Proof of Theorem \ref{theo:Toepbound1}]
Let us start with the sufficiency part. Put $\beta'=\beta+(\nu-\beta)q'>\frac{n}{r}-1$. We observe that
by Lemma \ref{lem:ApBp0}, $A_{\beta'}^{q'}(\mathcal D)$ is the dual space of $\mathbb {B}_\beta^q$ under
the duality pairing $$\langle f,g\rangle_{\nu,m}=\int_{\mathcal D}f(z)\Delta^m(\im z)\overline {\Box^mg(z)}dV_\nu(z)$$ for $m$ large enough.
\vskip .1cm
We also observe the condition $\nu>\frac{n}{r}-1+\frac{\beta-\frac{n}{r}+1}{q}-\frac{\alpha-\frac{n}{r}+1}{p}$ provides
that $\gamma>\frac{n}{r}-1$, and that as $\lambda\ge 1$, so by Theorem \ref{theo:Carlembed1}, for every $t\geq 1,$ there exists a constant $C>0$  such that
$$\int_{\mathcal D}|h(z)|^{t\lambda}d\mu(z)\leq C\|h\|_{t,\gamma}^{t\lambda}$$ for all $h\in A_\gamma^t({\mathcal D}).$ Taking $t\lambda=1,$
this is, in particular, equivalent to saying that
there is a constant $C>0$ such that for any $h\in L_\gamma^{\frac{1}{\lambda}}(\mathcal D)$,
\begin{equation}\label{lambda}\int_{\mathcal D}|h(z)|d\mu(z)\le C\|h\|_{\frac{1}{\lambda},\gamma}.\end{equation}
Let $f\in A_\alpha^p(\mathcal D)$ and $g\in A_{\beta'}^{q'}(\mathcal D)$. Using Fubini's Theorem and reproducing formula for weighted Bergman space, we obtain that
\begin{equation}\label{lambda 1}
 \langle g,T_\mu^\nu f\rangle_{\nu,m}=\int_{\mathcal D}\overline {f(z)}g(z)d\mu(z).
\end{equation}

Also $fg\in L_\gamma^{\frac{1}{\lambda}}(\mathcal D).$ As a matter of fact, by H\"oler's inequality, we have
\begin{eqnarray*}
\|fg\|_{\frac{1}{\lambda},\gamma}&=&\left(\int_{\mathcal D}|f(z)|^{1/\lambda}|g(z)|^{1/\lambda}\Delta^{\frac{1}{\lambda}(\nu+\frac{\alpha}{p}-\frac{\beta}{q})}(\im z)\frac{dV(z)}{\Delta^{n/r}(\im z)}\right)^\la\\
&\leq&\left(\int_{\mathcal D}|f(z)|^pdV_\alpha(z)\right)^{1/p}\left(\int_{\mathcal D}|g(z)|^{q'}\Delta^{q'(\nu-\frac{\beta}{q})-\frac{n}{r}}(\im z)dV(z)\right)^{1/q'}\\
&\leq& \|f\|_{p,\alpha}\|g\|_{q',\beta'}.
\end{eqnarray*}

Therefore, from (\ref{lambda 1}) and (\ref{lambda}),  we get
\Beas
|\langle g,T_\mu^\nu f\rangle_{\nu,m}| &\le& \int_{\mathcal D}|f(z)g(z)|d\mu(z)\leq C\|fg\|_{\frac{1}{\lambda},\gamma}\leq \|f\|_{p,\alpha}\|g\|_{q',\beta'}.\\ .
\Eeas
Thus, $T_\mu^\nu$ is bounded from $A_\alpha^p(\mathcal D)$ to $\mathbb {B}_\beta^q(\mathcal D)$ whenever $(\textrm{b})$ holds.
\vskip .1cm
Next, suppose that $T_\mu^\nu$ extends as a bounded operator from $A_\alpha^p(\mathcal D)$ to $\mathbb {B}_\beta^q(\mathcal D)$.
For $a\in \mathcal D$ given, we define
$$f_a(z)=K_{\nu+m}(z,a)$$ where $m$ is an integer large enough.
From Lemma \ref{int-compl} we obtain that $f_a$ belongs to
$A_\alpha^p(\mathcal D)$ with \Be\label{eq:normtesttoep1}\|f_a\|_{p,\alpha}=C_{p,m,\nu,\alpha}\Delta^{-(m+\nu+\frac{n}{r})+(\alpha+\frac{n}{r})\frac{1}{p}}(\im a).\Ee
We have
\Beas
T_\mu^\nu f_a(z) &=& \int_{\mathcal D}f_a(w)K_\nu(z,w)d\mu(w)\\ &=& \int_{\mathcal D}K_{\nu+m}(w,a)K_\nu(z,w)d\mu(w)
\Eeas
hence
$$\Box^mT_\mu^\nu f_z(z)=C_m\int_{\mathcal D}|K_{\nu+m}(z,w)|^2d\mu(z).$$
It follows in particular, using Corollary \ref{kor}, that for any $\delta\in (0,1)$ and any $z\in \mathcal D$, 
\Be\label{eq:estimmutest}\Box^mT_\mu^\nu f_z(z)\ge C_m\frac{\mu(B_\delta(z))}{\Delta^{2(m+\nu+\frac{n}{r})}(\im z)}.\Ee
On the other hand, we have from our hypothesis that $\Box^mT_\mu^\nu f_a\in A_{\beta+mq}^q(\mathcal D)$, thus from the pointwise estimate (\ref{eq:pointwiseestimberg}) and (\ref{eq:normtesttoep1}) that
\Beas
|\Box^mT_\mu^\nu f_z(z)| &\le& C\Delta^{-(\beta+mq+\frac{n}{r})\frac{1}{q}}(\im z)\|\Box^mT_\mu^\nu f_z\|_{q,\beta+mq}\\ &\le& C\Delta^{-(\beta+mq+\frac{n}{r})\frac{1}{q}}(\im z)\|T_\mu^\nu\|\|f_z\|_{p,\alpha}\\ &\le& C\|T_\mu^\nu\|\Delta^{-(\nu+2m+\frac{n}{r})+\frac{\alpha+\frac{n}{r}}{p}-\frac{\beta+\frac{n}{r}}{q}}(\im z).
\Eeas
Combining the latter with (\ref{eq:estimmutest}) we obtain that
$$\frac{\mu(B_\delta(z))}{\Delta^{2(m+\nu+\frac{n}{r})}(\im z)}\le C\|T_\mu^\nu\|\Delta^{-(\nu+2m+\frac{n}{r})+\frac{\alpha+\frac{n}{r}}{p}-\frac{\beta+\frac{n}{r}}{q}}(\im z)$$ that is
$$\mu(B_\delta(z))\le C\|T_\mu^\nu\|\Delta^{\nu+\frac{n}{r}+\frac{\alpha+\frac{n}{r}}{p}-\frac{\beta+\frac{n}{r}}{q}}(\im z)$$
or equivalently
$$\mu(B_\delta(z))\le C\|T_\mu^\nu\|\Delta^{\lambda(\gamma+\frac{n}{r})}(\im z).$$
The proof is complete.
\end{proof}
The following can be proved the same using the duality result of Theorem \ref{theo:dualitychange}.
\btheo\label{theo:Toepbound3}
Let $1<p\le q<q_\beta$, $\alpha,\beta,\nu>\frac{n}{r}-1$ with $\nu>\frac{n}{r}-1+\frac{\beta-\frac{n}{r}+1}{q}-\frac{\alpha-\frac{n}{r}+1}{p}$. Define the numbers $\lambda=1+\frac{1}{p}-\frac{1}{q}$ and $\lambda \gamma=\nu+\frac{\alpha}{p}-\frac{\beta}{q}$. Assume that $1<q'<q_{\beta'}$ where $\beta'=\beta+(\nu-\beta)q'>\frac{n}{r}-1$. Then the following assertions are equivalent.
\begin{itemize}
\item[(a)] The operator $T_\mu^\nu$ extends as a bounded operator from $A_\alpha^p(\mathcal D)$ to $\mathbb {B}_\beta^q(\mathcal D)$.
\item[(b)] There is a constant $C>0$ such that for any $\delta\in (0,1)$ and any $z\in \mathcal D$.
\Be\label{eq:Toepbound31}
\mu(B_\delta (z))\le C\Delta^{\lambda(\gamma+\frac{n}{r})}(\im z)
\Ee
\end{itemize}
\etheo
We now prove the case of estimations with loss.

\begin{proof}[Proof of Theorem \ref{theo:Toepbound2}]
That $(\textrm{b})\Rightarrow (\textrm{a})$ follows as in the proof of Theorem \ref{theo:Toepbound1} using Theorem \ref{theo:Carlembed2}. Let us prove that $(\textrm{a})\Rightarrow (\textrm{b})$.
We start by recalling that the Rademacher functions $r_j$ are given by $r_j(t)=r_0(2^jt)$, for $j\ge 1$ and $r_0$ is defined as follows
$$
r_0(t)=\left\{ \begin{matrix} 1 &\text{if }& 0\le t-[t]<1/2\\
      -1 & \text{ if } & 1/2\le t-[t]<1
                                  \end{matrix} \right.
$$
$[t]$ is the smallest integer $k$ such $k\le t<k+1$. We have the following.
\blem[Kinchine's inequality]\label{lem:Kinchine}
For $0 < p <\infty$ there exist constants $0 <L_p\le M_p <\infty$ such that, for all natural numbers $m$ and all complex
numbers $c_1, c_2,\cdots, c_m,$ we have
$$L_p\left(\sum_{j=1}^m|c_j|^2\right)^{p/2}\le \int_0^1\left|\sum_{j=1}^mc_jr_j(t)\right|^pdt\le M_p\left(\sum_{j=1}^m|c_j|^2\right)^{p/2}.$$
\elem
\vskip .1cm
Coming back to the proof, since $P_{\nu+m}$ is bounded on $L_\alpha^p(\mathcal D)$, it follows from Theorem \ref{theo:atomdecompo}
that given a $\delta$-lattice $\{z_j\}_{j\in \mathbb N_0}$, $\delta\in (0,1)$,
for every complex sequence $\{\lambda_j\}\in l_{\alpha}^p$, the series $$\sum_j\lambda_jK_{\nu+m} (z,z_j)\Delta^{\nu+m+n/r}(\im z_j)$$
is convergent in $A_\alpha^p(\mathcal D)$ and its sum $f$ satisfies
$$\|f\|_{p,\alpha}\le C\left(\sum_j|\lambda_j|^p\Delta^{\alpha+n/r}(\im z_j)\right)^{1/p}.$$
Thus if $r_j(t)$ is a sequence of Rademacher functions and $\{\lambda_j\}\in l_{\alpha}^p$,
$$f_t(z)=\sum_j\lambda_jr_j(t)K_{\nu+m} (z,z_j)\Delta^{\nu+m+n/r}(\im z_j)$$ also converges in $A_{\alpha}^p(\mathcal D)$ with $\|f_t\|_{p,\alpha}\le C\|\{\lambda_j\}\|_{l_\alpha^p}$.
\vskip .1cm
As $T_\mu^\nu$ extends as a bounded operator from $A_\alpha^p(\mathcal D)$ to $\mathbb {B}_\beta^q(\mathcal D)$,
we obtain for $m$ integer large enough (actually, $P_{\nu+m}$ is bounded on $L_\alpha^p(\mathcal D)$),
\Beas
\|\Box^mT_\mu^\nu f_t\|_{q,\beta+mq}^q &=& \int_{\mathcal D}\left|\sum_j\lambda_jr_j(t)\Box^mT_\mu^\nu K_{\nu+m} (z,z_j)\Delta^{\nu+m+n/r}(\im z_j)\right|^qdV_{\beta+mq}(z)\\ &\lesssim& \|T_\mu^\nu\|^q\|f_t\|_{p,\alpha}^q\\ &\lesssim& \|T_\mu^\nu\|^q\|\{\lambda_j\}\|_{l_\alpha^p}^q.
\Eeas
That is
\Be\label{eq:Besovnormft1}
\int_{\mathcal D}\left|\sum_j\lambda_jr_j(t)\Box^mT_\mu^\nu K_{\nu+m} (z,z_j)\Delta^{\nu+m+n/r}(\im z_j)\right|^qdV_{\beta+mq}(z) \lesssim \|T_\mu^\nu\|^q\|\{\lambda_j\}\|_{l_\alpha^p}^q.
\Ee
By Kinchine's inequality, we have
\Beas
&&\int_0^1\left|\sum_j\lambda_jr_j(t)\Box^mT_\mu^\nu K_{\nu+m} (z,z_j)\Delta^{\nu+m+n/r}(\im z_j)\right|^qdt\\ &\ge& L_q\left(\sum_j|\lambda_j|^2|\Box^mT_\mu^\nu K_{\nu+m} (z,z_j)|^2\Delta^{2(\nu+m+n/r)}(\im z_j)\right)^{q/2}.
\Eeas
Integrating both sides of (\ref{eq:Besovnormft1}) from $0$ to $1$ with respect to $dt$, we obtain using Fubini's Theorem and the last inequality that
\Be\label{eq:Besovhalfestim}
\int_{\mathcal D}\left(\sum_j|\lambda_j|^2|\Box^mT_\mu^\nu K_{\nu+m} (z,z_j)|^2\Delta^{2(\nu+m+n/r)}(\im z_j)\right)^{q/2}dV_{\beta+mq}(z)\lesssim \|T_\mu^\nu\|^q\|\{\lambda_j\}\|_{l_\alpha^p}^q.
\Ee
Observe that for a sequence $\{a_j\}$ of complex numbers and for $\frac 2q< 1$,
$$\sum_j|a_j|^q\chi_{B_j}(z)\le \left(\sum_j|a_j|^2\chi_{B_j}(z)\right)^{\frac q2}.$$
Thus using the latter observation, we obtain from (\ref{eq:Besovhalfestim}),
\Beas
&& \sum_j|\lambda_j|^q\Delta^{q(\nu+m+n/r)}(\im z_j)\int_{B_j}|\Box^mT_\mu^\nu K_{\nu+m} (z,z_j)|^qdV_{\beta+mq}\\ &=& \int_{\mathcal D}\sum_j|\lambda_j|^q\Delta^{q(\nu+m+n/r)}(\im z_j)|\Box^mT_\mu^\nu K_{\nu+m} (z,z_j)|^q\chi_{B_j}(z)dV_{\beta+mq}\\ &\lesssim& \int_{\mathcal D}\left(\sum_j|\lambda_j|^2\Delta^{2(\nu+m+n/r)}(\im z_j)|\Box^mT_\mu^\nu K_{\nu+m} (z,z_j)|^2\right)^{q/2}dV_{\beta+mq}\\ &\lesssim& \|T_\mu^\nu\|^q\|\{\lambda_j\}\|_{l_\alpha^p}^q.
\Eeas
But from Lemma \ref{lem:meanvalue} we have
$$|\Box^mT_\mu^\nu K_{\nu+m} (z_j,z_j)|^q\lesssim \frac{1}{\Delta^{mq+\beta+n/r}(\im z_j)}\int_{B_j}|\Box^mT_\mu^\nu K_{\nu+m} (z,z_j)|^qdV_{\beta+mq}(z).$$
Thus
\Be\label{eq:sumboxdiscrit}\sum_j|\lambda_j|^q\Delta^{q(\nu+m+n/r)+mq+\beta+n/r}(\im z_j)|\Box^mT_\mu^\nu K_{\nu+m} (z_j,z_j)|^q\lesssim \|T_\mu^\nu\|^q\|\{\lambda_j\}\|_{l_\alpha^p}^q.
\Ee
Now note that
$$\Box^mT_\mu^\nu K_{\nu+m} (z,z_j)=C_m\int_{\mathcal D}K_{\nu+m}(w,z_j)K_{\nu+m}(z,w)d\mu(w).$$
Hence
\Be\label{eq:mudiscritestim}
\frac{\mu(B_j)}{\Delta^{2(\nu+m+n/r)}(\im z_j)}\lesssim \Box^mT_\mu^\nu K_{\nu+m} (z_j,z_j).
\Ee
Combining (\ref{eq:sumboxdiscrit}) and (\ref{eq:mudiscritestim}) we obtain
$$
\sum_j|\lambda_j|^q\left(\frac{\mu(B_j)}{\Delta^{\lambda(\gamma+n/r)}(\im z_j)}\right)^q\Delta^{\frac{q}{p}(\alpha+\frac{n}{r})}(\im z_j)\lesssim \|T_\mu^\nu\|^q\|\{\lambda_j\}\|_{l_\alpha^p}^q.
$$
Thus as the sequence $\{|\lambda_j|^q\Delta^{\frac{q}{p}(\alpha+\frac{n}{r})}(\im z_j)\}$ belongs to $l^{p/q}$, it follows by duality that the sequence $\{\left(\frac{\mu(B_j)}{\Delta^{\lambda(\gamma+n/r)}(\im z_j)}\right)^q\}$ belongs to $l^{p/p-q}$ which is the dual of $l^{p/q}$
by the sum pairing $$\langle \{a_j\},\{b_j\}\rangle_\nu:=\sum_{j}a_j\overline {b_j}.$$
Hence
$\sum\limits_j\left(\frac{\mu(B_j)}{\Delta^{\lambda(\gamma+n/r)}(\im z_j)}\right)^{\frac{pq}{p-q}}\lesssim\|T_\mu^\nu\|^q$
i.e.
$ \sum\limits_j\left(\frac{\mu(B_j)}{\Delta^{\lambda(\gamma+n/r)}(\im z_j)}\right)^{\frac{1}{1-\lambda}} \lesssim \|T_\mu^\nu\|^q$
 i.e.
 \Beas \sum_j\left(\frac{\mu(B_j)}{\Delta^{\gamma+n/r}(\im z_j)}\right)^{\frac{1}{1-\lambda}}\Delta^{\gamma+n/r}(\im z_j)&\lesssim& \|T_\mu^\nu\|^q.
\Eeas
Thus, $\{\hat{\mu}_\delta(z_j)\}\in\ell^{\frac{1}{1-\lambda}}_\gamma$ and from Lemma \ref{lem:integraldiscretizationAverBer} this means that $(\textrm{b})$ holds. The proof is complete.

\end{proof}
We obtain in the same way the following.
\btheo\label{theo:Toepbound4}
Let $1<q<q_\alpha$, $q<p<\infty$, $\alpha,\beta,\nu>\frac{n}{r}-1$ with $\nu>\frac{n}{r}-1+\frac{\beta-\frac{n}{r}+1}{q}-\frac{\alpha-\frac{n}{r}+1}{p}$. Define the numbers $\lambda=1+\frac{1}{p}-\frac{1}{q}$ and $\lambda \gamma=\nu+\frac{\alpha}{p}-\frac{\beta}{q}$. Assume $\nu$ is large enough so that $P_\nu$ is bounded on $L_\alpha^p(\mathcal D)$ and $1<q'<q_{\beta'}$ where $\beta'=\beta+(\nu-\beta)q'>\frac{n}{r}-1$. Then the following assertions are equivalent.
\begin{itemize}
\item[(a)] The operator $T_\mu^\nu$ extends as a bounded operator from $A_\alpha^p(\mathcal D)$ to $\mathbb {B}_\beta^q(\mathcal D)$.
\item[(b)] For any $\delta\in (0,1)$, the function $$\mathcal {D}\ni z\mapsto \frac{\mu(B_\delta(z))}{\Delta^{\gamma+\frac{n}{r}}(\im z)}$$
belongs to $L_\gamma^{1/(1-\lambda)}(\mathcal D)$.
\end{itemize}
\etheo

\section{Hankel operators from $A_\alpha^p(\mathcal D)$ to $\mathbb {B}_\beta^q(\mathcal D)$}
In this section, we provide criteria of boundedness of (small) Hankel operators from weighted Bergman spaces to weighted Besov spaces .
\subsection{Boundedness of $h_b: A^p_\nu(\mathcal D)\rightarrow \mathbb B^p_\nu(\mathcal D)$, $p>2$.}
We would like to start this section with a general result involving a change of weights. To avoid any ambiguity, we recall that we write $h_b^{(\nu)}f=P_\nu(b\overline f)$. The superscript may be removed when the result involves the same weight.
Let us also recall the notation $$q_\nu=\frac{\nu+\frac{n}{r}-1}{\frac{n}{r}-1}.$$
We have the following result.

\btheo Let $\nu, \mu>\frac{n}{r}-1$ and $p>\max
\left\{\frac{\nu+\frac{n}{r}-1}{\mu},\frac{\nu-\frac{n}{r}+1}{\mu-\frac{n}{r}+1}\right\}$.
Then the Hankel operator $h_b^{(\mu)}$ is bounded from
$A^p_\nu(\mathcal D)$ into $\mathbb B^p_\nu(\mathcal D)$ if
$b=P_\nu g$ for some $g\in L^\infty(\mathcal D)$ for which $P_\nu g$
makes sense.\etheo

\Proof We first observe that if $b=P_\nu g$ with $g\in
L^\infty(\mathcal D)$, then for $m>\frac{n}{r}-1$, as
$$\Delta^m\Box^m b=T_{\nu,m}g,$$
we have from Lemma \ref{lem:bergtypeinfty} that $$\sup_{z\in \mathcal D}\Delta^m(\im z)|\Box^mb(z)|<\infty.$$
Now, we prove the sufficient condition. We suppose that $b=P_\nu g$ with $g\in L^\infty(\mathcal D);$ recall that
$h_b^{(\mu)}(f)$ is a representative of a class in $\mathbb B^p_\nu(\mathcal D)$ is equivalent to saying that
 $\Delta^m\Box^mh_b^{(\mu)}(f)\in L_\nu^p(\mathcal D)$ with $m$ large enough. Let us put $k=\overline {f}\Delta^m\Box^mb$.
 Observe that for $f\in A^p_\nu(\mathcal D)$, $k\in L^p_\nu(\mathcal D)$ as $\Delta^m\Box^mb$ is bounded.
 Next, using the formula (\ref{parts}), one easily sees that
$$\Delta^m\Box^mh_b^{(\mu)}(f)=T_{\mu,m}k.$$
Since $m$ is taken large enough and
$p>\max \{\frac{\nu+\frac{n}{r}-1}{\mu},\frac{\nu-\frac{n}{r}+1}{\mu-\frac{n}{r}+1}\}$, Lemma \ref{lem:bergtype} gives that
$$||\Delta^m\Box^mh_b^{(\mu)}(f)||_{L_\nu^p}=||T_{\mu,m}k||_{L_\nu^p}\lesssim ||k||_{L_\nu^p}\lesssim ||g||_\infty||f||_{A_\nu^p}.$$
The proof is complete.
\ProofEnd
We now prove Theorem \ref{hankel 1}.


\begin{proof}[Proof of Theorem \ref{hankel 1}] The sufficiency is a special case of the previous theorem
corresponding to $\mu=\nu$. Conversely, if $h_b$ is bounded, then
for any $f\in A^p_\nu(\mathcal D)$ and any $g\in A^{p'}_\nu(\mathcal
D)$, using the reproduction formula (\ref{rep3}), we have
\begin{equation}\label{eq:dualityHank} |\langle
h_bf,g\rangle_{\nu,m}| = |\langle b,fg\rangle_{\nu,m}|\le
C||f||_{A_\nu^p}||g||_{A_\nu^{p'}}.\end{equation}
We take $f(z)=f_w(z)=\Delta^{-(\nu+\frac{m}{2})}(\frac{z-\overline
w}{i})$ and
$g(z)=g_w(z)=\Delta^{-(\frac{n}{r}+\frac{m}{2})}(\frac{z-\overline
w}{i})$ with $m$ large enough. Following Lemma \ref{int-compl}, we
have
$$||f||_{A_\nu^p}=C_{\nu,m,p}\Delta^{-(\nu+\frac{m}{2})+(\nu+\frac{n}{r})\frac{1}{p}}(\im w),$$
and
$$||g||_{A_\nu^{p'}}=C_{\nu,m,p}\Delta^{-(\frac{n}{r}+\frac{m}{2})+(\nu+\frac{n}{r})\frac{1}{p'}}(\im
w).$$ Now, since by the reproduction formula (\ref{rep3}), we have
$P_{\nu+m}g=g,$ replacing $f$ and $g$ in (\ref{eq:dualityHank}), we
obtain
$$\Delta^m(\im w)\left|\int_{\mathcal
D}K_{\nu+m}(w,z)\Box^mb(z)dV_{\nu+m}(z)\right|\le C.$$ Taking
$\ell=m$ in (\ref{repr}), this is equivalent to
$$\Delta^m(\im w)|\Box^mb(w)|=\Delta^m(\im w)|P_{\nu+m}(\Box^mb)(w)|\le C.$$
Let $h=\Delta^m\Box^mb$, we have just obtained that $h\in L^\infty (\mathcal D)$. Moreover,
as $b\in A_\nu^2(\mathcal D)$, $P_\nu h$ makes sense and we have from the reproducing formula, taking $\ell=0$ in (\ref{repr})
that
$$P_\nu h=\int_{\mathcal D}K_\nu(\cdot,w)\Delta^m(\im w)\Box^mb(w)dV_\nu(w)=b.$$
The proof is complete.
\end{proof}

As a consequence of the above result, we have the
following corollary.

\bcor\label{cor:ApAp} Let $1<p<\infty$ and
$\nu>\frac{n}{r}-1$. If $P_\nu$ is bounded on
$L_\nu^p(\mathcal D)$ then the Hankel operator $h_b$
extends as a bounded operator on $A_\nu^p(\mathcal D)$ if
and only if $b=P_\nu g$ for some $g\in L^\infty(\mathcal D)$ for which $P_\nu g$ makes sense.
\ecor

 When $p=\infty$, we
can prove in the same way the following result.

\btheo The Hankel operator $h_b$ is bounded from $\mathcal
H^\infty(\mathcal D)$ into $\mathbb B$ if and only if $b=P_\nu g$ for some $g\in L^\infty(\mathcal D)$ for which $P_\nu g$ makes sense.\etheo

\subsection{Boundedness of $h_b:A^p_\alpha(\mathcal D)\rightarrow \mathbb B^q_\beta(\mathcal D)$, $1\le p<q<\infty$.}
Let us first recall the following embedding result from \cite{DD}.
\blem\label{lem:esti} Let $1\le p<q<\infty$ and
$\alpha, \beta>\frac{n}{r}-1$. Assume that $\frac{1}{p}(\alpha+\frac{n}{r})=\frac{1}{q}(\beta+\frac{n}{r})$. Then there exists a positive constant
$C$ such that for any $f\in A_\alpha^p(\mathcal D)$,
$$\int_{\mathcal
D}|f(z)|^q\Delta^{\beta-\frac{n}{r}}(\im
z)dV(z)\le C||f||_{p,\alpha}^q.$$
\elem

\Proof
Let us recall that there is a constant $C>0$ such that for any $f\in A_\alpha^p(\mathcal D)$ the following
pointwise estimate holds:
\begin{equation}|f(z)|\le C\Delta^{-\frac{1}{p}(\alpha+\frac{n}{r})}(\im z)\|f\|_{p,\alpha},\,\,\,\textrm{for all}\,\,\, z\in \mathcal {D}.
\end{equation}
It follows easily that
\Beas
I &:=& \int_{\mathcal
D}|f(z)|^q\Delta^{\beta-\frac{n}{r}}(\im
z)dV(z)\\ &=& \int_{\mathcal
D}|f(z)|^p|f(z)|^{q-p}\Delta^{\beta-\frac{n}{r}}(\im
z)dV(z)\\ &\le& C\|f\|_{A_\alpha^p}^{q-p}\int_{\mathcal
D}|f(z)|^pdV_\alpha(z)= C\|f\|_{p,\alpha}^q.
\Eeas
\ProofEnd
We can now prove Theorem \ref{hankel 2}.

\begin{proof}[Proof of Theorem \ref{hankel 2}] We first prove the sufficiency. Let $f\in
A_\alpha^p(\mathcal D)$. From the integration by parts
formula, we obtain \Beas\Box^mh_b^{(\nu)}f(z) &=&
C_{\nu,m}\int_{\mathcal
D}K_{\nu+m}(z,w)b(w)\overline {f(w)}dV_\nu(w)\\
&=& C_{\nu,m}\int_{\mathcal
D}K_{\nu+m}(z,w)\Box^mb(w)\overline {f(w)}dV_{\nu+m}(w)\\
&=& P_{\nu+m}T_bf(z),\Eeas where the operator $T_b$ is
defined by $$T_bf(z)=C_{\nu,m}\overline
{f(z)}\Box^mb(z),\,\,\,z\in \mathcal D.$$ Let us prove that
$T_b$ is bounded from $A_\alpha^p(\mathcal D)$ to
$L_{\beta+mq}^q(\mathcal D)$. Observe that if $\vartheta=-\frac{q}{s'}(\mu+\frac{n}{r})+\beta$, then $\frac{1}{q}(\vartheta+\frac{n}{r})=\frac{1}{p}(\alpha+\frac{n}{r})$. Using the embedding of
$A_\alpha^p(\mathcal D)$ into
$L_{\vartheta}^q(\mathcal
D)$, we easily obtain \Beas\int_{\mathcal
D}|T_bf(z)|^qdV_{\beta+mq}(z) &=& C\int_{\mathcal
D}|f(z)|^q|\Box^mb(z)|^qdV_{\beta+mq}(z)\\ &\le&
C\int_{\mathcal
D}|f(z)|^q\Delta^{-\frac{q}{s'}(\mu+\frac{n}{r})}(\im
z)dV_{\beta}(z)\\ &\le& C||f||_{p,\alpha}^q.\Eeas Now, since
by Lemma \ref{lem:bergtype}, for $m$ large enough and for $q$ as in our assumptions, $P_{\nu+m}$ is bounded on
$L_{\beta+mq}^q(\mathcal D)$, we conclude that for any $f\in
A_\alpha^p(\mathcal D)$, $\Box^mh_bf\in A_{\beta+mq}^q(\mathcal
D)$.

\vskip .2cm
For the converse, we consider the two different situations:
$1\le q\le 2$ and $2<q<\infty$.

\medskip

{\bf Case $2<q<\infty$}. If $h_b^{(\nu)}$ is bounded then as we
have seen in the previous section, there is a positive
constant $C$ such that for any $f\in A_\alpha^p(\mathcal D)$
and any $g\in A_{\beta'}^{q'}(\mathcal D)$ inequality
(\ref{eq:dualityHank}) holds. We take again
$f(z)=f_w(z)=\Delta^{-(\nu+\frac{m}{2})}(\frac{z-\overline
w}{i})$ and
$g(z)=g_w(z)=\Delta^{-(\frac{n}{r}+\frac{m}{2})}(\frac{z-\overline
w}{i})$ with $m$ large enough. From Lemma \ref{int-compl},
we have
$$||f||_{p,\alpha}=C_{\nu,m,p}\Delta^{-(\nu+\frac{m}{2})+(\alpha+\frac{n}{r})\frac{1}{p}}(\im w),$$\ProofEnd
and
$$||g||_{A_{\beta'}^{q'}}=C_{\nu,m,q}\Delta^{-(\frac{n}{r}+\frac{m}{2})+(\beta'+\frac{n}{r})\frac{1}{q'}}(\im
w).$$ Replacing $f$ and $g$ in (\ref{eq:dualityHank}), we
obtain
$$\Delta^{m-\frac{1}{p}(\alpha+\frac{n}{r})-\frac{1}{q'}(\beta'+\frac{n}{r})+\nu+\frac{n}{r}}(\im
w)\left|\int_{\mathcal
D}K_{\nu+m}(w,z)\Box^mb(z)dV_{\nu+m}(z)\right|\le C.$$ By
(\ref{repr}), this is equivalent to
$$
\Delta^{m+\frac{1}{s'}(\mu+\frac{n}{r})}(\im
w)|\Box^mb(w)| \le C,$$ for any $w\in \mathcal
D$.

\medskip
{\bf Case $1\le q\le 2$}. Note that in this case $\mathbb
B_\beta^q(\mathcal D)=A_\beta^q(\mathcal D)$. That $h_b^{(\nu)}$ is bounded, is
equivalent to saying that there exists a constant $C>0$ so that for
any $f\in A_\alpha^p(\mathcal D)$ and any representative $g$ of any
class in $g\in A_{\beta'}^{q'}(\mathcal
D)$,\begin{equation}\label{eq:dualitypetitq}|\langle h_b^{(\nu)}f,
g\rangle_{\nu}|\le
C||f||_{p,\alpha}||g||_{q',\beta'}\end{equation} 
Now, taking $g(z)=g_\zeta(z)=K_\nu(z,\zeta)$ and
$f(z)=f_\zeta(z)=\Delta^{-m}(\frac{z-\overline {\zeta}}{i})$,
we obtain
\Beas \langle h_b^{(\nu)}f, g\rangle_{\nu} &=& \int_{\mathcal
D}\left(h_b^{(\nu)}f(z)\right)\overline {g(z)}dV_{\nu}(z)\\ &=&
C_{\nu}\int_{\mathcal
D}K_{\nu}(\zeta,z)\left(h_b^{(\nu)}f(z)\right)dV_{\nu}(z)\\ &=& C_{\nu}P_{\nu}(h_b^{(\nu)}f)(\zeta)\\
&=& C_{\nu}h_b^{(\nu)}f(\zeta)=C_{\nu}\int_{\mathcal D}K_\nu(\zeta,z)\Box^mb(z)\overline {f_\zeta}(z)dV_{\nu+m}(z)\\
&=& C_{\nu}\int_{\mathcal D}K_{\nu+m}(\zeta,z)\Box^mb(z)dV_{\nu+m}(z)\\ &=& C_{\nu,m}\Box^mb(\zeta).
\Eeas

Now, choosing $m$ large enough, we
obtain from Lemma \ref{int-compl} that
$$||f||_{p,\alpha}=C_{\nu,m,p}\Delta^{-m+(\alpha+\frac{n}{r})\frac{1}{p}}(\im \zeta)$$
and since the conditions on $q$ insure that $g\in A^{q'}_{\beta'}$,
$$||g||_{q',\beta.}=C_{\nu,m,q}\Delta^{-(\nu+\frac{n}{r})+\frac{1}{q'}(\beta'+\frac{n}{r})}(\im
\zeta).$$ Taking all the above observations in
(\ref{eq:dualitypetitq}), we obtain that
$$\Delta^{m+(\nu+\frac{n}{r})-\frac{1}{p}(\alpha+\frac{n}{r})-\frac{1}{q'}(\beta'+\frac{n}{r})}(\im \zeta)|\Box^mb(\zeta)|\le C<\infty,\,\,\,\zeta\in \mathcal D.$$
The proof is complete.
\end{proof}

\bcor Let $1\le p<q<\infty$ and $\nu>\frac{n}{r}-1$. If $P_\nu$ is
bounded on $L_\nu^q(\mathcal D)$, then the Hankel operator $h_b$
extends into a bounded operator from $A_\nu^p(\mathcal D)$ to
$A^q_\nu(\mathcal D)$ if and only for some $m$ large,
$$\Delta^{m-(\nu+\frac{n}{r})(\frac{1}{p}-\frac{1}{q})}\Box^mb\in
L^\infty.$$\ecor

\subsection{Boundedness of $h_b^{(\nu)}:A^p_\alpha(\mathcal D)\rightarrow \mathbb B^q_\beta(\mathcal D)$, $1<q<p<\infty$.}
We begin with the case $p=\infty$.

\btheo Let $2\leq p<\infty$, $\nu>\frac{n}{r}-1$, and $\alpha\in \mathbb{R}$. Assume that 
$$\max\{\frac{\alpha+\frac{n}{r}-1}{\nu},\frac{\alpha-\frac{n}{r}+1}{\nu-\frac{n}{r}+1}\}<p.$$ Then the Hankel operator $h_b^{(\nu)}$ extends into a bounded operator from $\mathcal
H^\infty(\mathcal D)$ to $\mathbb B^p_\alpha(\mathcal D)$ if and only
if $b=P_\alpha^{(m)}(f)$ for some $f\in L^p_\alpha(\mathcal D)$ and $m$ a large enough integer.
\etheo

\begin{proof} First suppose that for $m$ large, $\Delta^m\Box^mb\in
L_\alpha^p(\mathcal D)$. Then for any $f\in \mathcal H^\infty(\mathcal
D)$ and any $g\in A^{p'}_{\alpha'}(\mathcal D)$, $\alpha'=\alpha+(\nu-\alpha)p'>\frac{n}{r}-1$, we easily have \Beas
|\langle h_b^{(\nu)}f,g\rangle_{\nu,m}| &=& \left|\int_{\mathcal
D}\overline{f(z)g(z)}\Box^mb(z)dV_{\nu+m}(z)\right|\\ &\le&
||\Delta^m\Box^mb||_{p,\alpha}||f||_{\infty}||g||_{p',\alpha'}<\infty.\Eeas

Conversely, if $h_b^{(\nu)}$ is bounded then as before there exists $C>0$
such that for any $f\in \mathcal H^\infty(\mathcal D)$ and any $g\in
A_{\alpha'}^{p'}(\mathcal D)$, $$ |\langle h_b^{(\nu)}f,g\rangle_{\nu,m}| =
\left|\int_{\mathcal
D}\overline{f(z)g(z)}\Box^mb(z)dV_{\nu+m}(z)\right|\le
C||f||_{\infty}||g||_{p',\alpha'}.$$ Taking $f(z)=1$ for any $z\in
\mathcal D$, we obtain that there exists a constant $C>0$ such that
for any $g\in A_{\alpha'}^{p'}(\mathcal D)$,
$$\left|\int_{\mathcal
D}\overline{g(z)}\Box^mb(z)dV_{\nu+m}(z)\right|\le
C||g||_{p',\alpha'};$$ i.e. $$|\langle b,g\rangle_{\nu,m}|\le
C||g||_{p',\alpha'}.$$ It follows from Lemma \ref{lem:ApBp0} that $b$ is
a representative of a class in $\mathbb B^p_\alpha(\mathcal
D)=(A_{\alpha'}^{p'}(\mathcal D))^*$ with respect to the pairing $\langle \cdot,\cdot\rangle_{\nu,m}$. As the condition $p>\max\left\{\frac{\alpha+\frac{n}{r}-1}{\nu},\frac{\alpha-\frac{n}{r}+1}{\nu-\frac{n}{r}+1}\right\}$ implies that $P_{\nu+m}$ is bounded on $L_{\alpha+mp}^p(\mathcal{D})$, for $m$ large enough, the equality (\ref{eq:extendedBergproj1}) allows us to conclude that
$b=cP_\alpha^{(m)}(f)$ with $f=\Delta^m\Box^mb$.
The proof is complete.
\end{proof}

We have in particular the following.

\bcor Let $1<p<\infty$ and $\nu>\frac{n}{r}-1$. If $P_\nu$
is bounded on $L_\nu^p(\mathcal D)$, then the Hankel
operator $h_b$ extends into a bounded operator from
$\mathcal H^\infty(\mathcal D)$ to $A^p_\nu(\mathcal D)$ if
and only if $b$ is in
$A^p_\nu(\mathcal D)$.\ecor

In the same way, with the help of Theorem \ref{theo:dualitychange}, we obtain the following.
\btheo Let $\max\left\{q_\alpha',\frac{\alpha+\frac{n}{r}-1}{\nu},\frac{\alpha-\frac{n}{r}+1}{\nu-\frac{n}{r}+1}\right\}<p<q_\alpha$, $\alpha, \nu>\frac{n}{r}-1$.  Then the Hankel operator $h_b^{(\nu)}$ extends into a bounded operator from $\mathcal
H^\infty(\mathcal D)$ to $A^p_\alpha(\mathcal D)$ if and only
if $b\in A^p_\alpha(\mathcal D)$.
\etheo
The following endpoint case follows also the same using the duality result in Lemma \ref{A1}.
\btheo Let $\nu>\frac{n}{r}-1$. Then the Hankel operator $h_b^{(\nu)}$ extends into a bounded operator from $\mathcal
H^\infty(\mathcal D)$ to $A^1_\nu(\mathcal D)$ if and only
if $b\in A^1_\nu(\mathcal D)$.
\etheo

We next prove the following lemma.
\blem\label{lem:boxdiscrit}
Let $2\le q<\infty$, $\alpha,\nu>\frac{n}{r}-1$, $\mu\in \mathbb{R}$. Assume that $\frac{\mu-\frac{n}{r}+1}{\nu-\frac{n}{r}+1}<q<\infty$. For $m$ an integer such that $P_\gamma$ is bounded on $L_\beta^{q'}$, $\gamma=\alpha+\nu+m+\frac{n}{r},\,\,\, \beta=\mu+(\nu-\mu)q',\,\,\,\frac{1}{q}+\frac{1}{q'}=1$, consider the operator
$$T_{\alpha,\nu}^mf(z)=\int_{\mathcal D}K_{\alpha+\nu+m+\frac{n}{r}}(z,w)\Box^m f(w)dV_{\nu+m}(w)$$
define for functions with compact support. Then if there exists a constant $C>0$ such that for any $\delta\in (0,1)$ and any $\delta$-lattice $\{z_j\}_{j\in \mathbb N_0}$
of points in $\mathcal D$,
$$\sum_j\Delta^{q(\alpha+\frac{n}{r})+\mu+\frac{n}{r}}(\im z_j)|\Delta^m(\im z_j)T_{\alpha,\nu}^mf(z_j)|^q\le C^q,$$
then $f$ is a representative of a class $F\in \mathbb {B}_{\mu}^q$ with $\|F\|_{\mathbb {B}_\mu^q}\le C$.
\elem
\Proof
First note that the condition on the exponent $q$ implies that $\beta>\frac{n}{r}-1$, and that $A_\beta^{q'}$ is the dual space of $\mathbb {B}_\mu^q$ under the pairing
$$\langle h,F\rangle_{\nu,m}=\int_{\mathcal D}h(z)\overline {\Box^mF(z)}dV_{\nu+m},\,\,\,h\in A_\beta^{q'}\,\,\,\textrm{and}\,\,\,F\in \mathbb {B}_\mu^q.$$
As the projector $P_\gamma$ ($\gamma=\alpha+\nu+m+\frac{n}{r}$) is bounded on $L_\beta^{q'}(\mathcal D)$, by Theorem \ref{theo:atomdecompo}, any $h\in A_\beta^{q'}(\mathcal D)$ can be represented as
$$h(z)=\sum_j\lambda_j\Delta^{\gamma+\frac{n}{r}}(\im z_j)K_\gamma(z,z_j)$$ with $\|\{\lambda_j\}\|_{l_\beta^{q'}}\lesssim \|h\|_{q',\beta}$. It follows that
if $F$ is a class represented by $f$,
\Beas
\langle h,F\rangle_{\nu.m} &=& \int_{\mathcal D}\sum_j\lambda_j\Delta^{\gamma+\frac{n}{r}}(\im z_j)K_\gamma(z,z_j)\overline {\Box^mF(z)}dV_{\nu+m}(z)\\ &=&
\sum_j\lambda_j\Delta^{\alpha+\nu+m+2\frac{n}{r}}(\im z_j)\overline {T_{\alpha,\nu}^mf(z_j)}.
\Eeas
It follows from H\"older's inequality that
\Beas
\|F\|_{\mathbb {B}_\mu^q} &:=& \sup_{h\in A_\beta^{q'}, \|h\|_{q',\beta}\le 1}|\langle h,F\rangle_{\nu,m}|\\ &\le& \sup_{h\in A_\beta^{q'}, \|h\|_{q',\beta}\le 1}\|\{\lambda_j\}\|_{l_\beta^{q'}}\left(\sum_j\Delta^{q(\alpha+\frac{n}{r})}(\im z_j)|\Delta^m(\im z_j)T_{\alpha,\nu}^mf(z_j)|^q\Delta^{\mu+\frac{n}{r}}(\im z_j)\right)^{1/q}\\ &\lesssim& C.
\Eeas
\ProofEnd

The following can be proved the same way with the help of the duality in Theorem \ref{theo:dualitychange}.
\blem\label{lem:boxdiscrit1}
Let $\max\left\{q_\beta',\frac{\mu+\frac{n}{r}-1}{\nu},\frac{\mu-\frac{n}{r}+1}{\nu-\frac{n}{r}+1}\right\}< q<q_\mu$, $\alpha,\mu,\nu>\frac{n}{r}-1$. Define $\beta=\mu+(\nu-\mu)q',\,\,\,\frac{1}{q}+\frac{1}{q'}=1$, and consider the operator
$$T_{\alpha,\nu}f(z)=\int_{\mathcal D}K_{\alpha+\nu+\frac{n}{r}}(z,w)f(w)dV_{\nu}(w)$$
define for functions with compact support. Then if there exists a constant $C>0$ such that for any $\delta\in (0,1)$ and any $\delta$-lattice $\{z_j\}_{j\in \mathbb N_0}$
of points in $\mathcal D$,
$$\sum_j\Delta^{q(\alpha+\frac{n}{r})+\mu+\frac{n}{r}}(\im z_j)|T_{\alpha,\nu}f(z_j)|^q\le C^q,$$
then $f\in A_{\mu}^q$ with $\|f\|_{A_\mu^q}\le C$.
\elem
Note that the condition  $\max\left\{1,\frac{\mu+\frac{n}{r}-1}{\nu},\frac{\mu-\frac{n}{r}+1}{\nu-\frac{n}{r}+1}\right\}\le q<q_\mu$ provides that $\beta>\frac{n}{r}-1$ and that $P_\nu$ is bounded on $A_\mu^q(\mathcal{D})$. Also as $1<q'<q_\beta$, by Theorem \ref{theo:dualitychange}, $A_\beta^{q'}(\mathcal{D})$ is the dual of $A_\mu^q(\mathcal{D})$ under the pairing $\langle \cdot,\cdot\rangle_\nu$.
\vskip .2cm
Let us know prove Theorem \ref{hankel 3}.

\begin{proof}[Proof of Theorem \ref{hankel 3}] We start with the sufficiency which is the harmless part. Let $m$ be a positive integer such that Hardy inequality holds
for $(q,\beta+mq)$. Recall that $d\lambda(z)=\Delta^{-2n/r}(\im z)dV(z)$. Then for
any $f\in A_{\alpha}^p(\mathcal D)$ and any $g\in A_{\beta'}^{q'}(\mathcal
D)$,
\Beas |\langle h_b^{(\nu)}f,g\rangle_{\nu,m}| &=& \left|\int_{\mathcal
D}\overline{f(z)g(z)}\Box^mb(z)dV_{\nu+m}(z)\right|\\ &=& \left|\int_{\mathcal
D}[\Delta^{\frac{1}{s}(\gamma+\frac{n}{r})}(\im z)\overline{f(z)g(z)}][\Delta^{m+\frac{1}{s'}(\mu+\frac{n}{r})}(\im z)\Box^mb(z)]d\lambda(z)\right|\\ &\leq& \|fg\|_{s,\gamma}\|\Delta^m\Box^m b\|_{s',\mu}\\ &\le&
||\Delta^m\Box^mb||_{s',\mu}||f||_{p,\alpha}||g||_{q',\beta'}<\infty.
\Eeas
Thus $$\|h_b^{(\nu)}\|:=\sup_{||f||_{A_\alpha^p}\le 1, ||g||_{A_{\beta'}^{q'}}\le 1}|\langle h_b^{(\nu)}f,g\rangle_{\nu,m}|\le ||\Delta^m\Box^mb||_{s',\mu}.$$
Let us now move to the necessity. We observe that as  $P_{\sigma}$ is bounded on $L_\alpha^p(\mathcal D)$,
we have from Theorem \ref{theo:atomdecompo} that given a $\delta$-lattice $\{z_j\}_{j\in \mathbb N_0}$, $\delta\in (0,1)$,
for every complex sequence $\{\lambda_j\}\in l_{\alpha}^p$, the series $$\sum_j\lambda_jK_{\sigma} (z,z_j)\Delta^{\sigma+n/r}(\im z_j)$$ is convergent in $A_\alpha^p(\mathcal D)$ and it sum $f$ satisfies
$$\|f\|_{p,\alpha}\le C\left(\sum_j|\lambda_j|^p\Delta^{\alpha+n/r}(\im z_j)\right)^{1/p}.$$
Thus if $r_j(t)$ is a sequence of Rademacher functions and $\{\lambda_j\}\in l_{\alpha}^p$,
$$f_t(z)=\sum_j\lambda_jr_j(t)K_{\sigma} (z,z_j)\Delta^{\sigma+n/r}(\im z_j)$$ also converges in
$A_{\alpha}^p(\mathcal D)$ with $\|f_t\|_{p,\alpha}\le C\|\{\lambda_j\}\|_{l_\alpha^p}$.
\vskip .1cm
As $h_b^{(\nu)}$ extends as a bounded operator from $A_\alpha^p(\mathcal D)$ to $\mathbb {B}_\beta^q(\mathcal D)$, we obtain for $m$ integer large enough,
\Beas
\|\Box^mh_b f_t\|_{q,\beta+mq}^q &=& \int_{\mathcal D}\left|\sum_j\lambda_jr_j(t)\Box^mh_b K_{\sigma} (z,z_j)\Delta^{\sigma+n/r}(\im z_j)\right|^qdV_{\beta+mq}(z)\\ &\lesssim& \|h_b^{(\nu)}\|^q\|f_t\|_{p,\alpha}^q\\ &\lesssim& \|h_b^{(\nu)}\|^q\|\{\lambda_j\}\|_{l_\alpha^p}^q.
\Eeas
That is
\Be\label{eq:Besovnormft}
\int_{\mathcal D}|\sum_j\lambda_jr_j(t)\Box^mh_b K_{\sigma} (z,z_j)\Delta^{\sigma+n/r}(\im z_j)|^qdV_{\beta+mq}(z) \lesssim \|h_b^{(\nu)}\|^q\|\{\lambda_j\}\|_{l_\alpha^p}^q.
\Ee
Following the same reasoning as in the proof of the necessity part in Theorem \ref{theo:Toepbound2} up to the inequality (\ref{eq:sumboxdiscrit}), we obtain that
\Be\label{eq:sumboxdiscrithank}\sum_j|\lambda_j|^q\Delta^{q(\sigma+n/r)+mq+\beta+n/r}(\im z_j)|\Box^mh_b^{(\nu)} K_{\sigma} (z_j,z_j)|^q\lesssim \|h_b^{(\nu)}\|^q\|\{\lambda_j\}\|_{l_\alpha^p}^q.
\Ee
We next observe that
\Beas\Box^mh_b^{(\nu)} K_{\sigma} (z,z_j) &=& C_m\int_{\mathcal D}b(w)K_{\sigma}(z_j,w)K_{\nu+m}(z,w)dV_\nu(w)\\ &=& C_m\int_{\mathcal D}\Box^mb(w)K_{\sigma}(z_j,w)K_{\nu+m}(z,w)dV_{\nu+m}(w).
\Eeas
Thus
$$\Box^mh_b^{(\nu)} K_{\sigma} (z_j,z_j)=C_m\int_{\mathcal D}\Box^mb(w)K_{\sigma+\nu+m+\frac{n}{r}}(z_j,w)dV_{\nu+m}(w)=C_mT_{\sigma,\nu}^mb(z_j).$$
Taking this in (\ref{eq:sumboxdiscrithank}) we obtain
\Be\label{eq:sumboxdiscrithank}\sum_j|\lambda_j|^q\Delta^{q(\sigma+n/r)+mq+\beta+n/r}(\im z_j)|T_{\sigma,\nu}^mb(z_j)|^q\lesssim \|h_b^{(\nu)}\|^q\|\{\lambda_j\}\|_{l_\alpha^p}^q.
\Ee
As $\{\lambda_j\}$ is chosen arbitrary in $l_\alpha^{p}$, it follows by duality since $\{|\lambda_j|^q\}$ belongs to $l_\alpha^{p/q}$ that the sequence
$\{\Delta^{q(\sigma+n/r)+mq+\beta-\nu}(\im z_j)|T_{\sigma,\nu}^mb(z_j)|^q\}$ belongs to $l_{\alpha'}^{p/(p-q)}$, $\alpha'=\alpha+\frac{p}{p-q}(\nu-\alpha)$, which is the dual of $l_\alpha^{p/q}$ under the sum
pairing $$\langle \{a_j\},\{b_j\}\rangle_\nu:=\sum_ja_j\overline {b_j}\Delta^{\nu+n/r}(\im z_j)$$ with
$$\|\{\Delta^{q(\sigma+n/r)+mq+\beta-\nu}(\im z_j)|T_{\sigma,\nu}^mb(z_j)|^q\}\|_{l_{\alpha'}^{p/(p-q)}}\lesssim \|h_b^{(\nu)}\|^q.$$ That is
$$\sum_j\Delta^{\frac{pq}{p-q}(\alpha+n/r)+\mu+n/r}(\im z_j)|\Delta^m(\im z_j)T_{\sigma,\nu}^mb(z_j)|^{\frac{pq}{p-q}}\lesssim \|h_b^{(\nu)}\|^q.$$
Thus as $\frac{pq}{p-q}>q>2$, using Lemma \ref{lem:boxdiscrit} we conclude that $b$ is a representative of a class $B\in \mathbb {B}_\mu^{s'}$, $s'=\frac{pq}{p-q}$, and
$$\|B\|_{\mathbb {B}_\mu^{s'}}\lesssim \|h_b^{(\nu)}\|.$$
The proof is complete.
\end{proof}

We obtain in the same way using the duality result in Theorem \ref{theo:dualitychange} and Lemma \ref{lem:boxdiscrit}, the following.
\btheo Let $\max\left\{q_\beta',\frac{\beta+\frac{n}{r}-1}{\nu},\frac{\beta-\frac{n}{r}+1}{\nu-\frac{n}{r}+1}\right\}< q<q_\beta$ and $\alpha, \beta, \nu>\frac{n}{r}-1$, $\frac{1}{p}+\frac{1}{p'}=\frac{1}{q}+\frac{1}{q'}=1$.  Define
$\beta'=\beta+(\nu-\beta)q'$; $\frac{1}{2}<\frac{1}{p}+\frac{1}{q'}=\frac{1}{s}<1$; $\frac{\alpha}{p}+\frac{\beta'}{q'}=\frac{\gamma}{s}$; $\frac{1}{s}+\frac{1}{s'}=1$ and $\frac{\gamma}{s}+\frac{\mu}{s'}=\nu$.
Assume that \begin{equation}\label{mugrand}
\frac{1}{p}(\alpha-\frac{n}{r}+1)+\frac{1}{q'}(\beta'-\frac{n}{r}+1)<\nu-\frac{n}{r}+1.
\end{equation}
Then the followig assertions hold.
\begin{itemize}
\item[(i)] If $b$ is the representative of a class in $\mathbb{B}_\mu^{s'}$, then the Hankel operator $h_b^{(\nu)}$ extends into a bounded operator from $A_\alpha^p(\mathcal D)$ to $A^q_\beta(\mathcal D)$.
\item[(ii)] If there exists $\sigma>\frac{n}{r}-1$ such that $P_\sigma$ is bounded on $L_\alpha^p(\mathcal{D})$ and if $h_b^{(\nu)}$ extends into a bounded operator from $A_\alpha^p(\mathcal D)$ to $A^q_\beta(\mathcal D)$, then $b$ is the representative of a class in $\mathbb{B}_\mu^{s'}$.
\end{itemize}
\etheo
Using Theorem \ref{theo:dualitychange} and Lemma \ref{lem:boxdiscrit1}, we obtain in the same way the following.
\btheo Let $\max\left\{q_\beta',\frac{\beta+\frac{n}{r}-1}{\nu},\frac{\beta-\frac{n}{r}+1}{\nu-\frac{n}{r}+1}\right\}< q<q_\beta$ and $\alpha, \beta, \nu>\frac{n}{r}-1$, $\frac{1}{p}+\frac{1}{p'}=\frac{1}{q}+\frac{1}{q'}=1$.  Define
$\beta'=\beta+(\nu-\beta)q'$; $\frac{1}{p}+\frac{1}{q'}=\frac{1}{s}$; $\frac{\alpha}{p}+\frac{\beta'}{q'}=\frac{\gamma}{s}$; $\frac{1}{s}+\frac{1}{s'}=1$ and $\frac{\gamma}{s}+\frac{\mu}{s'}=\nu$.
Assume that \begin{equation}\label{mugrand}
\frac{1}{p}(\alpha-\frac{n}{r}+1)+\frac{1}{q'}(\beta'-\frac{n}{r}+1)<\nu-\frac{n}{r}+1,
\end{equation}
and 
\begin{equation}\label{sgrand}
\frac{1}{q_\gamma}<\frac{1}{s}<\max\left\{\frac{1}{q_\mu'},\frac{\nu}{\mu+\frac{n}{r}-1},\frac{\nu-\frac{n}{r}+1}{\mu-\frac{n}{r}+1}\right\}.
\end{equation}
Then the following hold.
\begin{itemize}
\item[(i)] If $b\in A_\mu^{s'}$, then the Hankel operator $h_b^{(\nu)}$ extends into a bounded operator from $A_\alpha^p(\mathcal D)$ to $A^q_\beta(\mathcal D)$.
\item[(ii)] If there exists $\sigma>\frac{n}{r}-1$ such that $P_\sigma$ is bounded on $L_\alpha^p(\mathcal{D})$ and if $h_b^{(\nu)}$ extends into a bounded operator from $A_\alpha^p(\mathcal D)$ to $A^q_\beta(\mathcal D)$, then $b\in A_\mu^{s'}$.
\end{itemize}
\etheo
In Particular, we have the following result.

\bprop Let $1< q < p<\infty$ and $\nu>\frac{n}{r}-1$. Suppose that
$P_\nu$ is bounded on both $L_\nu^p(\mathcal D)$ and $L_\nu^q(\mathcal D)$. Then the Hankel operator $h_b$ is
bounded from $A_\nu^p(\mathcal D)$ to $A^q_\nu(\mathcal D)$ if and only if $b\in \mathbb B^s_\nu(\mathcal D)$,
where $s=\frac{pq}{p-q}$.
\eprop
\begin{proof}
We note that $q<\frac{pq}{p-q}=s$. If $s\ge 2$, everything follows as in the proof of the previous theorem. If $q<s<2$, then by interpolation
we also have that $P_\nu$ is bounded on $L_\nu^s(\mathcal D)$ and again the proof follows as for the theorem above. Note that in this last case, $k_0(s,\nu)=0$. The proof is complete.
\end{proof}

\section{Weak Factorization of functions in Bergman spaces of tube domains over symmetric cones}
In this section, we prove Theorem \ref{thm:weakfact1} and Theorem \ref{thm:weakfact2}. In fact we only have to prove equivalence between boundedness of Hankel operators and and weak factorization results as stated in the two theorems. We have the following result for weighted Bergman spaces of our setting.
\btheo\label{prop:equivhankweakfact}
Let $\gamma,\nu>\frac nr-1$ and $1<s<q_\gamma.$ Let $1<p, q<\infty$ and $\alpha,\beta>\frac nr-1$
so that $\frac{1}{s}=\frac{1}{p}+\frac{1}{q}<1$, $\frac{\gamma}{s}=\frac{\alpha}{p}+\frac{\beta}{q}$ and $$\frac{1}{p}(\alpha-\frac{n}{r}+1)+\frac{1}{q}(\beta-\frac{n}{r}+1)<\nu-\frac{n}{r}+1$$ holds. Assume that $P_\sigma$ is bounded on $A_\alpha^p(\mathcal{D})$ for some $\sigma>\frac{n}{r}-1$, and that 
\begin{equation}\label{weakfactocond1}
\max\{q_\gamma',\frac{\gamma'+\frac{n}{r}-1}{\nu},\frac{\gamma'-\frac{n}{r}+1}{\nu-\frac{n}{r}+1}\}<s<q_\gamma\,\,\,\textrm{and}\,\,\,\max\{q_\beta',\frac{\beta+\frac{n}{r}-1}{\nu},\frac{\beta-\frac{n}{r}+1}{\nu-\frac{n}{r}+1}\}< q<q_\beta;
\end{equation}
or
\begin{equation}\label{weakfactocond2}
1<s<2\,\,\,\textrm{and}\,\,\,1<q\leq 2.
\end{equation}
Then the following assertions are equivalent.
\begin{itemize}
\item[(i)] $A_\gamma^s(\mathcal{D})=A_\alpha^p(\mathcal{D})\bigotimes A_\beta^q(\mathcal{D})$.
\item[(ii)] For any analytic function $b$ on $\mathcal{D}$, $h_b^{(\nu)}$ extends as a bounded operator from $A_\alpha^p(\mathcal{D})$ to $\mathbb{B}_{\beta'}^{q'}(\mathcal{D})$
if and only if $b$ is a representative of class in $\mathbb{B}_{\gamma'}^{s'}(\mathcal{D}).$
\end{itemize}
\etheo
\begin{proof}
Recall that $\gamma'=\gamma+(\nu-\gamma)s'$, $\beta'=\beta+(\nu-\beta)q'$. When condition (\ref{weakfactocond1}) holds, we have $\gamma',\beta'>\frac{n}{r}-1$ and $\mathbb{B}_{\beta'}^{q'}(\mathcal{D})=A_{\beta'}^{q'}(\mathcal{D})$ and $\mathbb{B}_{\gamma'}^{s'}(\mathcal{D})=A_{\gamma'}^{s'}(\mathcal{D})$.
\vskip .2cm
That $(\textrm{i})\Rightarrow (\textrm{ii})$ is harmless. To prove
$(\textrm{ii})\Rightarrow (\textrm{i}),$ we start by establishing the following:

\blem\label{facto} Let $\gamma,\nu>\frac nr-1$ and $1<s<q_\gamma.$ Let $1<p, q<\infty$ and $\alpha,\beta>\frac nr-1$
so that $\frac{1}{s}=\frac{1}{p}+\frac{1}{q}<1$, $\frac{\gamma}{s}=\frac{\alpha}{p}+\frac{\beta}{q}$ and the hypotheses in Theorem \ref{prop:equivhankweakfact} are satisfied.

Assume that for any analytic function $b$ on $\mathcal{D}$, $h_b^{(\nu)}$ extends as a bounded operator from $A_\alpha^p(\mathcal{D})$ to $\mathbb{B}_{\beta'}^{q'}(\mathcal{D})$
if and only if $b\in \mathbb{B}_{\gamma'}^{s'}(\mathcal{D}).$ Then $$A_\alpha^p(\mathcal{D})\bigotimes A_\beta^q(\mathcal{D})\subset A_\gamma^s(\mathcal{D}).$$
\elem

\Proofof{Lemma \ref{facto}}

Following the reasoning of \cite{PauZhao2}, let $F\in (A_\gamma^s(\mathcal{D}))^*.$ Thanks to Lemma \ref{lem:ApBp0}, there is $b\in \mathbb{B}^{s'}_{\gamma'}(\mathcal{D})$ and $m$ a large enough integer such that $F(\varphi)=\langle b,\varphi\rangle_{\nu,m}$ for all $\varphi\in A_\gamma^s(\mathcal{D}).$ Let $f\in A_\alpha^p(\mathcal{D})\bigotimes A_\beta^q(\mathcal{D}).$
For every $\varepsilon>0,$ there exist finite sequences $\{g_j\}\in A_\alpha^p(\mathcal{D})$ and $\{l_j\}\in A_\beta^q(\mathcal{D})$ such that $f=\sum\limits_jg_jl_j$ and
$\sum\limits_j\|g_j\|_{p,\alpha}\|l_j\|_{p,\beta}\leq \|f\|+\varepsilon.$ Using the reproduction formula (\ref{repr2}) and the boundedness of $h_b^{(\nu)},$ we write
\Beas
|F(f)|&=&|\langle b, f\rangle_{\nu,m}|=|\langle b, \sum\limits_jg_jl_j\rangle_{\nu,m}|=|\sum\limits_j\langle b, g_jl_j\rangle_{\nu,m}|
=|\sum\limits_j\langle \bar{g}_j\Box^mb,l_j\rangle_{\nu+m}|\\ &=& |\sum\limits_j\langle \Box^mh_b^{(\nu)}({g}_j),l_j\rangle_{\nu+m}|\\ &=& |\sum\limits_j\langle h_b^{(\nu)}({g}_j),l_j\rangle_{\nu,m}|\\
&\leq&\sum\limits_j|\langle h_b^{(\nu)}({g}_j),l_j\rangle_{\nu,m}|\leq
\sum\limits_j\|h_b^{(\nu)}({g}_j)\|_{\mathbb{B}_{\beta'}^{q'}}\|l_j\|_{q,\beta}\\&\leq&\|h_b\|\sum\limits_j\|{g}_j\|_{p,\alpha}\|l_j\|_{q,\beta}<\|h_b\|(\|f\|+\varepsilon).
\Eeas
Letting $\varepsilon$ tend to $0$ yields $|F(f)|\leq\|h_b\|\|f\|,$ that is $F\in (A_\alpha^p(\mathcal{D})\bigotimes A_\beta^q(\mathcal{D}))^*.$
\ProofEnd
To prove the reverse inclusion we need the following:
\blem\emph{(\cite[Proposition 5.1]{BBPR})}\label{facto 1}
Assume that $1\leq s<\infty$ and $\gamma,\sigma>\frac nr-1$ and $P_\sigma$ is bounded on $L_\gamma^s(\mathcal{D}).$ Then $A^s_\gamma(\mathcal{D})$ is the closed linear span
of the set $\{B_\sigma (\cdot,z):z\in \mathcal{D}\}.$ In particular, $B_\sigma (\cdot,z)\in A_\gamma^s(\mathcal{D})$ for any $z\in \mathcal{D}$.
\elem
We then deduce the reverse inclusion based on the Hahn-Banach Theorem.
\blem \label{facto 2} Let $\gamma>\frac nr-1$ and $1<s<q_\gamma.$ Let $1<p, q<\infty$ and $\alpha,\beta>\frac nr-1$
so that $\frac{1}{s}=\frac{1}{p}+\frac{1}{q}<1$, $\frac{\gamma}{s}=\frac{\alpha}{p}+\frac{\beta}{q}$.
The space $A_\alpha^p(\mathcal{D})\bigotimes A_\beta^q(\mathcal{D})$ is dense in $A^s_\gamma(\mathcal{D})$ and consequently
$$(A_\alpha^p(\mathcal{D})\bigotimes A_\beta^q(\mathcal{D}))^*\subset (A^s_\gamma(\mathcal{D}))^*.$$
\elem
\Proofof{Lemma \ref{facto 2}}
As $1<s<q_\gamma$, taking $\sigma=m_1+m_2$ where $m_1$ and $m_2$ are two large positive numbers, we have that by Lemma \ref{lem:bergtype}
that $P_\sigma$ is bounded on $A_\gamma^p(\mathcal{D})$. It follows from Lemma \ref{facto 1} that the space $A_\gamma^s(\mathcal{D})$ is 
the closed linear span of the set $\{B_{m_1+m_2} (\cdot,z):z\in \mathcal{D}\}.$ Thus, for every $f\in A_\gamma^s(\mathcal{D}),$ there
is a sequence
$\{f_k\}$ defined by $f_k=\sum\limits_{j=1}^ka_jB_{m_1+m_2}(\cdot, z_j),$ where $a_j$'s are complex constants,
such that $\lim\limits_{k\to\infty}\|f-f_k\|_{s,\gamma}=0.$ Observe
now that $B_{m_1+m_2}(\cdot, z_j)=C_\gamma\Da^{-m_1-m_2-\frac nr}\left(\frac{\cdot-\bar{z}_j}{i}\right)=g_jl_j$
where $g_j=C_\gamma\Da^{-m_1}\left(\frac{\cdot-\bar{z}_j}{i}\right)$ and
$l_j=\Da^{-m_2-\frac nr}\left(\frac{\cdot-\bar{z}_j}{i}\right)$. For $m_1$ and $m_2$ sufficiently large, we also have that
$g_j\in A_\alpha^p(\mathcal{D})$ and $l_j\in A_\beta^q(\mathcal{D})$ for any $j=1,2,\cdots,k$, and 
$\|B_\gamma(\cdot, z_j)\|_{s,\gamma}=\|g_j\|_{p,\alpha}\|l_j\|_{q,\beta}.$
Hence for any $k\in\N,$
$f_k=\sum\limits_{j=1}^ka_jg_jl_j$ with $g_j\in A^p_\alpha(\mathcal{D})$ and $l_j\in A^q_\beta(\mathcal{D}).$ This shows that the sequence
$\{f_k\}$ lies in $A_\alpha^p(\mathcal{D})\bigotimes A_\beta^q(\mathcal{D})$ and therefore we are done with the density.

Furthermore, if $F\in (A_\alpha^p(\mathcal{D})\bigotimes A_\beta^q(\mathcal{D}))^*,$ by the Hanh-Banach Theorem, there $G\in (A^s_\gamma(\mathcal{D}))^*$ that extends $F$ with
$\|G\|=\|F\|.$ By the density of $A_\alpha^p(\mathcal{D})\bigotimes A_\beta^q(\mathcal{D})$ in $A^s_\gamma(\mathcal{D})$ and the boudedness of both $F$ and $G,$ we conclude
that $F=G\in (A^s_\gamma(\mathcal{D}))^*.$

\ProofEnd
The proof is complete.
\end{proof}
For $\nu>\frac{n}{r}-1$, let us denote by $\mathcal{R}_\nu$ the set of exponent $p\in [1,\infty)$ such that $P_\nu$ is bounded on $L_\nu^p(\mathcal{D})$. We have the following result.
\btheo
Let $\nu>\frac{n}{r}-1$. Assume that $p,q,s\in \mathcal{R}_\nu$ and $\frac{1}{s}=\frac{1}{p}+\frac{1}{q}<1$. Then the following assertions are equivalent.
\begin{itemize}
\item[(i)] $A_\nu^s(\mathcal{D})=A_\nu^p(\mathcal{D})\bigotimes A_\nu^q(\mathcal{D})$.
\item[(ii)] For any analytic function $b$ on $\mathcal{D}$, $h_b^{(\nu)}$ extends as a bounded operator from $A_\nu^p(\mathcal{D})$ to $A_{\nu}^{q'}(\mathcal{D})$
if and only if $b$ is a representative of class in $\mathbb{B}_{\nu}^{s'}(\mathcal{D}).$
\end{itemize}
\etheo
\begin{proof}
We only have to take care of the implication $(ii)\Rightarrow (i)$. This follows as above with the following lemma in place of Lemma \ref{facto 2}.
\blem\label{lem:facto3}
Let $\nu>\frac{n}{r}-1$. Assume that $p,q,s\in \mathcal{R}_\nu$ and $\frac{1}{s}=\frac{1}{p}+\frac{1}{q}<1$. Then the space $A_\nu^p(\mathcal{D})\bigotimes A_\nu^q(\mathcal{D})$ is dense in $A^s_\nu(\mathcal{D})$ and consequently,
$$(A_\nu^p(\mathcal{D})\bigotimes A_\nu^q(\mathcal{D}))^*\subset (A^s_\nu(\mathcal{D}))^*.$$
\elem
\begin{proof}[Proof of Lemma \ref{lem:facto3}]
Let $F\in A^s_\nu(\mathcal{D})$, $\alpha_1,\alpha_2>0$, and $\epsilon>0$. Put $\alpha=\alpha_1+\alpha_2$, and define $$F_{\epsilon,\alpha}(z)=F(z+i\epsilon e)\Delta^{-\alpha}(\frac{\epsilon z+ie}{i}).$$
We have the following facts (see the proof of \cite[Theorem 3.23]{BBGNPR}):
\begin{itemize}
\item[(a)] $F_{\epsilon,\alpha}\in A^s_\nu(\mathcal{D})$ and $\|F_{\epsilon,\alpha}\|_{s,\nu}\leq \|F\|_{s,\nu}$.
\item[(b)] $\lim_{\epsilon\rightarrow 0}\|F-F_{\epsilon,\alpha}\|_{s,\nu}=0$.
\item[(c)] For $\alpha_1$ large enough, $F_{\epsilon,\alpha_1}\in A^p_\nu(\mathcal{D})$.
\end{itemize}
Observe also with Lemma \ref{int-compl} that for $\alpha_2$ large enough, $\Delta^{-\alpha_2}(\frac{\epsilon z+ie}{i})$ is  in $A^q_\nu(\mathcal{D})$. Thus $F_{\epsilon,\alpha}=F_{\epsilon,\alpha_1}G_{\epsilon,\alpha_2}$, where $G_{\epsilon,\alpha_2}(z):=\Delta^{-\alpha_2}(\frac{\epsilon z+ie}{i})$, with $\|F_{\epsilon,\alpha}\|_{s,\nu}\leq \|F_{\epsilon,\alpha_1}\|_{p,\nu}\|G_{\epsilon,\alpha_2}\|_{q,\nu}$. Hence $A_\nu^p(\mathcal{D})\bigotimes A_\nu^q(\mathcal{D})$ is dense in $A^s_\nu(\mathcal{D})$.
\vskip .2cm
That $(A_\nu^p(\mathcal{D})\bigotimes A_\nu^q(\mathcal{D}))^*\subset (A^s_\nu(\mathcal{D}))^*$ follows as in the proof of Lemma \ref{facto 2}.
\end{proof}
The proof is complete.
\end{proof}
\bibliographystyle{plain}

\end{document}